\newcommand{\E}[3]{\mathbb{E}_{#1\sim f_{#1}(#2,#1)}[#3(#2,#1)]}
\newcommand{\nablaE}[3]{\nabla_{x}\mathbb{E}_{#1\sim f_{#1}(#2,#1)}[#3(#2,#1)]}
\newcommand{\Enabla}[3]{\mathbb{E}_{#1\sim f_{#1}(#2,#1)}[\nabla_{x}#3(#2,#1)]}
\begin{document}


\RUNAUTHOR{Wenxuan Liu and Zhihai Zhang $^*$}

\RUNTITLE{Solving Data-Driven Newsvendor Pricing Problems with Decision-Dependent Effect}

\TITLE{Solving Data-Driven Newsvendor Pricing Problems with Decision-Dependent Effect}

\ARTICLEAUTHORS{%
    \AUTHOR{Wenxuan Liu}
	\AFF{Department of Industrial Engineering, Tsinghua University, Beijing, 100084, China}
	\AUTHOR{Zhi-Hai Zhang$^*$}
	\AFF{Department of Industrial Engineering, Tsinghua University, Beijing, 100084, China, zhzhang@tsinghua.edu.cn}
} 

\ABSTRACT{%
This paper investigates the data-driven pricing newsvendor problem, which focuses on maximizing expected profit by deciding on inventory and pricing levels based on historical demand and feature data. We first build an approximate model by assigning weights to historical samples. However, due to decision-dependent effects, the resulting approximate model is complicated and unable to solve directly. To address this issue, we introduce the concept of approximate gradients and design an Approximate Gradient Descent (AGD) algorithm. We analyze the convergence of the proposed algorithm in both convex and non-convex settings, which correspond to the newsvendor pricing model and its variants respectively. Finally, we perform numerical experiment on both simulated and real-world dataset to demonstrate the efficiency and effectiveness of the AGD algorithm. We find that the AGD algorithm can converge to the local maximum provided that the approximation is effective. We also illustrate the significance of two characteristics: distribution-free and decision-dependent of our model. Consideration of the decision-dependent effect is necessary for approximation, and the distribution-free model is preferred when there is little information on the demand distribution and how demand reacts to the pricing decision. Moreover, the proposed model and algorithm are not limited to the newsvendor problem, but can also be used for a wide range of decision-dependent problems.
}%

\KEYWORDS{newsvendor, pricing, data-driven, prescription, approximate gradient, decision-dependent}

\maketitle

%


\section{Introduction}

The newsvendor pricing problem plays an important role in the inventory decision theory. Nowadays, the newsvendor pricing model is applied to various fields, such as retailing, energy industry, service, and agriculture \citep{Shen-2022-POM,ijoo-price-setting-2021,price-only-Deyong2019}. Compared with the traditional newsvendor model, the newsvendor pricing model has the decision-dependent property. In traditional newsvendor problems, the order decision is independent of model parameters. But in the newsvendor pricing model,  the decision can affect the distribution of random parameters such as demand. We call this effect as decision-dependent effect. For example, when a charging station is to determine the electrical supply and charging price, the customers' charging demand is influenced by its price decision. The decision-dependent effect can increase the complexity of model since the manager need to predict how the distributions change with decisions rather than predict a static distribution.  

Recently, the data-driven newsvendor is also attracting attention since the demand distribution is often unknown in practice \citep{OR-big-data-newsvendor,ijoo-price-setting-2021}. The decision maker is only accessible to the historical data. Those data often contains historical decision, random parameter and features. For instance, consider the scenario again that a charging station should determine the electricity supply and charging price in a daily frequency. First, the manager does not know the distribution of demand under specific price, but he/she is provided with the historical demand data and some feature information including date, whether and time. Those feature can also affect the distribution of random parameter. Such historical information enables the decision maker to forecast or prescribe the demand by data-driven approaches. 




Our motivation stems from real-world production scenarios like assigning a new price to products. Given the inherent uncertainty of demand and the lack of information on its relationship to price, the pricing decision is challenging. This is particularly the case for the pricing of new products, where the manager only have the historical price and demand data of similar products. In this case, the manager is exactly a newsvendor and need to decide how many products to produce and how to set price for them. Another application is in the operation management of the electricity industry, where managers need to combine the historical data and today's weather features to determine the electricity price. Because of the decision-dependent effect, the demand model consists of the demand-price relationship and the demand fluctuation. The manager can adopt the Predict-then-Optimize (PTO) framework that first predicts the demand rule by regression and then optimizes by predicted demand rule. However, if the true demand model is complex, the estimation error will be large and the subsequent optimization step will further amplify this error \citep{OR-big-data-newsvendor}. Therefore, a model that integrates the prediction and optimization steps is needed. Ideally, we hope that this model will be applicable under various complex demand models and easy to solve.

The difficulty of our work lies in many aspects. The main difficulty that set our work apart from other traditional newsvendor model is the decision-dependent distribution of demand. We cannot determine the demand distribution in advance since it is affected by decisions. For this reason, we integrate the prediction and optimization steps by the prescriptive approach. But the decision variable should be assigned into the sample weight and bring nonlinearity and even non-continuity to the problem. The second barrier is the distribution-free setting. We can only use historical data to approximate the conditional demand distribution without making any assumption about the concrete form of the distribution.


In this work, we study a data-driven newsvendor pricing problem with decision-dependent property. We investigate how prescriptive methods are used to approximate the objective functions and validate the asymptotical optimality of the prescriptive approach. Because the decision-dependent approximate functions are highly nonlinear and difficult to work with, significant effort revolves around deriving an approximate gradient for the objective function. To show the validity of the approximate gradient, we analyze the convergency of the approximate gradient. Based on our approximate gradient, we develop an approximate gradient descent (AGD) algorithm in which the descent direction is determined by the approximate gradient. To validate the proposed AGD algorithm, we prove that any converge subsequences generated by AGD can converge to the points that satisfies the necessary condition for optimality. We also study the extended price-only model and price-adjustment cost model, which satisfies the convex and strongly convex conditions respectively. We show that under convex condition and certain assumptions, the objective error of AGD algorithm is bounded, indicating the theoretical guarantee when applying to our newsvendor pricing model. We further prove that the sequence of solutions provided by AGD converges to the true optimal point under the strongly convex conditions, which is in accordance with the price-adjustment cost scenario. Finally, we apply our method and algorithm to both simulated and practical datasets and confirm the effectiveness of our approach. We also demonstrate the importance of the two key properties of our model, distribution-free and decision-dependent.


\subsection{Model and Approach Overview}

We consider a single product scenario and the decision variables are the order quantity $q\in \mathbb{R}^+$ and price $p\in \mathbb{R}^+$. The decision maker aims to minimize its expected cost. He is provided with the historical selling information $z$ that is related to the demand $d$. Note that both the feature and the price decision can affect the distribution of demand $D\sim f_D(p,z)$. The company tries to minimize the expected cost, $\mathbb{E}_{D\sim f_D(p,z)}[l(p,q,D)]$. The true model can be represented below: 

\begin{align}
	\min_{p,q}\mathbb{E}_{D\sim f_D(p,z)}[l(p,q,D)], \label{eq::true-obj-pi}
\end{align}

Note that the distribution of demand $f_D(p,z)$ is unknown. Although previous studies have proposed several widely-used demand models, for example the additive demand \citep{additive-demand-Biswas2018,additive-demand-Chong2015} and multiplicative demand \citep{two-side-11-tech-note-or,multiplicative-demand-Salinger2011} we do not make any assumption on the concrete form of $f_D(p,z)$. We use the feature information to link demand of the following period with historical demand data as in \cite{Bertsimas-2019}'s work. Based on historical data, some local machine learning (ML) methods, including $k$-nearest neighbors (kNN) and kernel methods are adopted to construct the weights from data. The weights reflect the similarity between a historical sample and the present selling scenario. We construct an approximate model with such weights:

\begin{equation*}
	\min_{p,q}\mathbb{\hat E}_{D\sim f_D(p,z)}[l(p,q,D)]
\end{equation*}

where $\mathbb{\hat E}_{D\sim f_D(p,z)}[l(p,q,D)]$ approximates the objective function. The model is still difficult to solve because the approximate functions are nonconvex and even discontinuous if the kNN or CART weight is adopted. Therefore, we develop the approximate gradient as follows: 

\begin{equation*}
	G^N(x;z) \overset{def}{=} \mathbb{\hat E}_{D\sim f_D(p,z)}[\nabla_{p,q} l(p,q,D)]
\end{equation*}

where $G^N(x;z)$ approximates the expectation of the objective gradient. The approximate gradient can then be used to solve the true model.

Our contributions may be summarized as follows: 

1. We formally develop a data-driven approach to prescribe the distribution-free newsvendor pricing problem and extend to the price-only and price-adjustment cost scenarios. Our approach approximates the objective by weight functions generated by machine learning algorithms (nearest neighbor, kernel, tree methods). We prove the convergence performance for our approximate model. Unfortunately, the approximate model can be non-differentiable and discontinuous because of the decision-dependent effect, implying that solving the approximate model may be challenging.

2. Given the intractability of the approximate model, we develop an approximation to the gradient of true objective function, which we call the approximate gradient. The approximate gradient is derived by the same weight approximation approach as the objective functions. We prove a key consistency result of the approximate gradient function in Proposition \ref{prop::grad-converge}. Namely, the approximate gradient can converge to the expectation of the profit gradient gradient under some mild conditions. The approximate gradient allows us to design gradient-based algorithms based on the approximate gradient. In our work, we develop the AGD algorithm based on the gradient descent algorithm. The concept of approximate gradient solves the difficulty when gradient information is inaccessible for the true model and the approximate model is intractable. 

3. To validate our algorithm, we first prove the convergency result of AGD algorithm. We show that any converging subsequences generated by AGD algorithm converges to points that have a bounded expected gradient, which is a necessary condition of optimality (Theorem \ref{theo::nece-cond}). We also investigate the convergence results for the price-only and price-adjustment cost models. We prove that the objective error is bounded under general convex conditions (Theorem \ref{theo::convergence-convex}), which corresponds to the price-only model without price-adjustment cost. The solution sequence generated by the algorithm converges to the true optimal solution if the strongly convex condition holds (Theorem \ref{theo::strconv-dist-stable}), which corresponds to the price-adjustment cost case. 

4. We gain insights from the numerical experiment on the importance of the distribution-free and decision-dependent properties in our model. Although these two properties increase the complexity of the model, ignoring the decision-dependent property will result in unreasonable predictions and solutions. At the same time, when there is a lack of information about the relationship between demand and price, the use of distribution-free model can adapt to complex unknown demand distributions.

The rest of the paper is organized as follows. Some relevant works are reviewed in Section \ref{sec::literature}. Section \ref{sec::model-description} outlines our approach for approximating the model and introduces the concept of approximate gradients and the approximate gradient descent algorithm. It also discusses the impact of decision-dependent effects on the resulting approximate model. In Section \ref{sec::analysis}, we analyze the convergence of the AGD algorithm under non-convex, convex, and strongly convex conditions and connect these results to the newsvendor pricing model and its extended models. The experiment result on AGD algorithm with managerial insights on the distribution-free and decision-dependent properties are provided in Section \ref{sec::experiment}. Section \ref{sec::conclusion} concludes our work and outlines future research directions. For the briefness of reading, we furnish all the proofs in the E-companions.

\subsection{Relevant Literature}\label{sec::literature}

In this part, we summarize some relevant streams in the literature, including \textit{newsvendor pricing problem}, \textit{Distribution-free newsvendor problems} and \textit{decision affects uncertainty models}. 

\textit{Newsvendor pricing problem}. The newsvendor pricing problem has been studied for decades. A systematic review on this topic can be seen in \cite{price-only-Deyong2019}. One of the key aspect in this problem is the relationship between demand and price. There are two forms of stochastic demand: additive and multiplicative. \cite{price-only-Dada1999} gave the theoretical optimality form in both two demand scenarios. Apart from the demand uncertainty, \cite{two-side-9-price-setting} and \cite{two-side-10-price-setting} studied the pricing problem under uncertain supply. And \cite{risk-averse-msom1} further investigated the risk-averse effect on the pricing decision. However, the research above relied on the assumption of a concrete demand distribution form, while we focus on distribution-free scenarios and do not impose any assumption on the form of demand distribution and its relationship to price. \cite{ijoo-price-setting-2021} proposed an ML framework to solve the distribution-free newsvendor pricing problem based on regression methods. But their work was limited to solving the newsvendor pricing problem. In this work, we focus on a more general framework that can solve decision-dependent problems of the same category as the newsvendor pricing problem. 

There are also some variants of the newsvendor pricing problem. \cite{price-only-Deyong2019} mentioned the price-only newsvendor model that only makes pricing decisions. The model with cost of price adjustment is another variant. In the inventory and pricing coordination literatures, the price-adjustment cost is usually employed to the multi-period inventory management problems \citep{adjustment-cost7-multiperiod-Lu2018,adjustment-cost8-multiperiod-Chen2015}. Although we do not consider the costs in other periods in the single-period newsvendor problems, there exists the ``reference price'' that plays a similar role as previous price \citep{adjustment-cost9-referpri-chen2016,adjustment-cost10-referpri-Srivastava2022}. Results from \cite{adjustment-cost11-revenue-Sabri2009} also showed that the change of price can affect the revenue management of perishable products. Our work investigates the extend price-only model with and without price-adjustment cost and address the nonlinearity brought by the price-adjustment cost.

\textit{Distribution-free newsvendor}. Many recent studies have utilized data-driven approaches to address distribution-free newsvendor models and newsvendor pricing problems. Earlier studies estimated demand based on historical samples before solving the order decision under forecasted demand. An typical approach is the SAA. This method is to calculate the average profit under a certain decision by replacing the random variable with the demand samples \citep{SAA-Kleywegt,SAA-Mello2001}. \cite{SAA-Feng} pointed out that a pure SAA approach will cause overfitting. Therefore, common approaches including adding a regularizer or constraints that can control the predicted profit variability \citep{no-feature-Levi-1,no-feature-Levi-2,SAA-3-Cheung-2019,SAA-4-Qin-2022}. The SAA approach cannot directly suit our problem because the decision variable can affect the distribution of random parameters. We cannot predict the demand by simply replacing the random parameters with historical data, since the historical decision is not equal to the decision at this moment. 

The empirical risk minimization (ERM) is a promoted approach compared with the SAA method that ignores the feature information. The ERM method aims to find a function that directly maps the observed feature to optimal decision \citep{ERM1-Vapnik1998}. \cite{OR-big-data-newsvendor} highlighted the importance of demand features for decision making, and they demonstrated that decision made without features is biased. They focused on the standard newsvendor model where the supply is deterministic and no risk-averse strategy is adopted, while we further consider the price decision that can affect demand. However, the ERM method cannot handle the decision-dependent property either, since it still assumes that the demand distribution is independent of price decision. 

\cite{Bertsimas-2019} proposed a general prescriptive approach to solve the prescribe optimization problem. They estimated the objective using ML methods. The input of such ML methods are features, and decision variables if the decision can affect uncertainty. The output of the ML methods are treated as the sample weight, which is used to directly estimate the true objective. We adopt the same concept of weight approximation as their work. But their work bypassed the solution approach of such model in the continuous settings, which must be addressed in our work. \cite{sol-Bertsimas} further extended the work by adding the penalize term into the objective. But they only provided solution methods on tree-based weights. And it is still not clear how to handle other types of weights. In our work, we give a general solution approach to such problems that is suitable for any kind of approximation. In \cite{Shen-2022-POM}, this prescriptive model was extended to constrained conditions. They focused on the traditional newsvendor problem with profit risk constraint and contextual information. However, their work had fundamental differences with our model since they did not consider the case when decision affects uncertainty. In concrete, the supply decision would not affect the stochastic demand in traditional newsvendor problem that \cite{Shen-2022-POM} analyzed. 

Other data-driven approaches related to the newsvendor problem include the quantile regression \citep{ijoo-price-setting-2021}, deep neural network \citep{deep-learning-Davood, deep-learning-zhang} and robust optimization (RO) \citep{RO4-msom}. Compared with these works, we study a distribution-free pricing problem with decision-dependent effect, and extend the model to a convex price-only case and nonlinear price adjustment cost case. We manage to solve the complex decision-dependent model by proposing the concept of approximate gradient.

\textit{Decision affects uncertainty models}. Though the decision-dependent effect is not fully researched in the field of data-driven management, several stochastic programming works have investigated the solution approaches to the optimization problems under this effect. Some studies addressed the problem by assuming the form of distribution. For instance, \cite{DAU2-rev-Hellemo2018} explored several ways to solve the decision-dependent models. \cite{DAU3-LLR-Liujy2021} developed an algorithm based on local linear regression (LLR) models. In concrete, they used LLR models to approximate the distribution in the neighborhood of a decision point. The models above relied on some assumptions on the form of the distributions. Instead, our work do not require any assumption on the distribution rule. There are also works that did not rely on the distribution assumptions. \cite{DAU1-Dimitris2022} proposed a sampling method to solve the decision-dependent models when the distribution is under Lipschitz control. But their approach cannot be directly used in our model since we cannot access the demand distribution on every decision points, thus the sampling process cannot be carried out. 

Another stream of work that considers the decision-dependent effect is the RO / distributionally robust optimization (DRO). \cite{RO3-Luo2020} constructed the decision-dependent robust set based on the distribution moment, and \cite{RO1-Xiong2022} constructed a sorting-dependent feedstock condition ambiguity set to solve a stochastic resource allocation problem. But their models relied on the knowledge of the distribution family. \cite{RO2-WDistance-Noyan2021} constructed the ambiguity set by Wasserstein distance, but they assume that the distribution map is known in advance, while we treat the distribution map as an unknown function and do not impose any assumption on the form of the distribution map.

Our solution approach refers to the repeated gradient descent approaches proposed by \cite{DAU4-Perdomo2020}, where the descent direction is determined through the expectation of gradient. We extend their work to the distribution-free scenario and illustrate how to approximate the expectation of gradient when the underlying distribution is unknown. Moreover, they only investigated the performance under strongly-convex condition, while we extend the performance analysis of the AGD algorithm to general cases. 


In summary, this study considers several real-world costs and effects in the newsvendor pricing problem. We use the data-driven prescriptive method to construct our approximate model, but the decision-dependent effect significantly increase the complexity of approximate model and makes the gradient of the true model intractable. We develop a new approach that integrates ML approximation and stochastic programming. The solution of our model brings insights to other decision-dependent programming problems.

\section{Model Description}\label{sec::model-description}

\subsection{Model and Approximation}
\label{sec::model}

We first construct the single-product newsvendor price-setting problem. We denote $c$ and $s$ as the purchase cost and salvage value, respectively. Given a new scenario with feature $Z=z$, the manager should determine the selling price $p$ and order quantity $q$ at the same time. We assume $s<c\leq p$, and denote the dimension of a vector $v$ as $dim(v)$. The total cost associated with the decision and random demand $D$ is:

\begin{equation}
	\label{eq::def-pi-noadj}
	l(p,q,D) = -p(D\wedge q) + cq - s(q-D)^+
\end{equation}

where $x\wedge y = \min\{x, y\}$, and $(x)^+ = \max\{x, 0\}$. 




We then substitute the cost function into model (\ref{eq::true-obj-pi}) and get the true newsvendor pricing model:

\begin{align}
	\mbox{(\textbf{True Model})}&\min_{p,q}f(p,q;z)\overset{def}{=}\mathbb{E}_{D\sim f_D(p,z)}[-p(D\wedge q) + cq - s(q-D)^+], \label{eq::true-obj-expand}
\end{align}

Note that the objective function in (\ref{eq::true-obj-expand}) is decision-dependent, which implies that the distribution $f_D(p,z)$ shifts correspondingly once the price decision changes.

In the data-driven problem, the distribution of $D$ is unknown for any specific $p$ and $z$ and only data $S_N = \{(p^1,z^1,D^1),...,(p^N,z^N,D^N)\}$ is available. Let $(p^*,q^*)$ denotes the full-information optimal decision, which maximize the decision-dependent objective function while satisfying risk-averse constraint. The prescriptive approximation is to use $S_N$ to construct a data-driven approximation to (\ref{eq::true-obj-expand}). We approximate the model through the weighted sample approach as \cite{Bertsimas-2019}. Consider the approximate model of the form

\begin{align}
	\mbox{(\textbf{Appr-Model})}\min_{p,q}\hat f(p,q;z)\overset{def}{=}\sum_{i=1}^Nw^i(p,z)l(p,q,D^i), \label{eq::appr-obj}
\end{align}

where $w^i(p,z)$ are weight functions derived from the data by ML methods. Our approximate model (\ref{eq::appr-obj}) does not restrict the form of the weight functions. For briefness, we only present the definition of kNN weight below. Other definitions of weight functions (e.g. kernel regression (KR), classification and regression tree (CART), random forest (RF)) can be seen in Section \ref{sec::appd-weight-def}. Readers can also refer these weight definitions to \cite{Bertsimas-2019}.

\begin{definition}[kNN weight]
	The weight function can be derived from the definition of kNN: 

	\begin{equation}
		\label{eq::weight-KNN}
		w^{\mbox{kNN},i}(p,z)=\frac 1 k \mathbb{I}\{(p,z)\mbox{ is a kNN of } (p^i,z^i)\}, \quad \forall i \in [N]
	\end{equation}

	where $\mathbb{I}\{\cdot\}$ is the indicator function, $[N]=\{1,2,...,N\}$ denotes the index set and $x^i$ is a kNN of $x$ if and only if $|\{j\in\{1,...,N\}\backslash i:dist(x^j,x)<dist(x^i,x)\}|<k$. We use the Euclidean distance $dist(x,y)=\Vert x-y\Vert_2$ here to represent the distance between two vectors. The kNN weight function indicates that all sample points that are among the k-nearest neighbor share the same weight, and points that are not included are not taken into consideration. Finding kNN points of $(p, z)$ can be done in $O(N*(dim(z)+1))$ time and can be sped up by using the kD tree. 
\end{definition}

We note that the weight approximation has practical meanings. It reflects the similarity between the previous condition and the current condition. The condition is variables that can affect random parameters in the model. In our newsvendor pricing problem, it means the pricing decision and features that can affect the random demands. For instance, when the distance between current condition $(p,z)$ and previous condition $(p^i,z^i)$ is large, the kernel weight will decrease, meaning that sample $i$ is not similar to current condition and plays a minor role in aiding the decision making. 


Specifically, when we take kNN weight function, the approximate model is equivalent to a nonlinear mixed integer programming (NMIP) problem in (\ref{eq::NMIP-model}). This equivalent model can exemplify the intractability incurred by the decision-dependent effect.

\begin{equation}
	\label{eq::NMIP-model}
\begin{aligned}
	\min_{p,q,x_i,\beta_i,y_i} &\sum_{i=1}^Nx_i\left[-(p-c)q+(p-s)y_i\right]\\
	\mbox{s.t. } & y_i\ge q-D^i,\quad\forall i \in [N]\\
	&\sum_{i=1}^N x_i=k\\
	&(p-p^i)^2-(p-p^j)^2+(\Vert z-z^i\Vert^2-\Vert z-z^j\Vert^2)\\
	&\leq M(x_j-x_i+1), \quad\forall i\in[N],j\in[N], i\not=j\\
	&p,q\ge 0, x_i\in\{0,1\}, y_i\ge 0,\quad \forall i \in [N]
\end{aligned}
\end{equation}

where $x_i$ are auxiliary variables that indicates whether sample $i$ is a kNN of current condition $(p,z)$. $y_i$ is the auxiliary variable representing the max operator. $M$ is a sufficiently large positive number. Remark that the decision variable $x_i$ indeed reflects the decision-dependent characteristic since $x_i$ will degenerate to a weight parameter if the problem is not decision-dependent. We observe that the nonlinearity is mainly cost by $x_i$, indicating the decision-dependent characteristic significantly increases the complexity of the model. 

We can see from the equivalent model that the approximate model is difficult to solve both in theory and in practice. It demonstrates that solving (\ref{eq::appr-obj}) is at least as difficult as optimizing a NMIP model, which is NP-hard. Indeed, for a fixed feature $z$, $\hat f(\cdot,\cdot;z)$ is nonlinear and not even continuous since the decision variable $p$ is in the kNN weight weight function. We are therefore motivated to develop a reasonable approach for solving the decision-dependent model.






\subsection{Approximate Gradient}


In this section, we focus on deriving a tractable method to solve the decision-dependent model. Our approximate function can be derived from the partial gradient of the cost function. Ideally, when the solution sequence moves to the direction of our approximate gradient, the generated sequence will converge to a stationary point as in the gradient descent method. When the cost function is convex, the corresponding sequence will get close to the optimal solution.

To begin with the derivation of the approximate gradient, we first derive the gradient of cost function. Note that since the cost function is not smooth but concave in $q$, we derive one of the subgradient. We use $\nabla$ to denote the gradient and $\partial$ to denote the subgradient.

\begin{equation}
\label{eq::profit-grad}
	\partial_{p,q} l(p,q,D)= \left\{\left[
		\begin{aligned}
			&-(D\wedge q)\\
			& -(p-c) + (p-s)e
		\end{aligned}
	\right]: e\in \left[\mathbb{I}\{q>D\}, \mathbb{I}\{q\ge D\}\right]\right\}
\end{equation}

Though the denotation $\partial_{p,q} l(p,q,D)$ usually refers to the subgradient set of the cost function, to simplify the denotation, we use it to denote any elements belongs to the subgradient set in the following passage. In practice, the choice of element only affects the approximate gradient at $q=D$.  We take the same approximate approach as model (\ref{eq::appr-obj}) , which we formally state in Definition \ref{def::appr-grad}.

\begin{definition}[Approximate Gradient]
	\label{def::appr-grad}
Given a feature $z$ and a decision point $(p, q)$, the approximate gradient at this point is defined as 

\begin{equation}
	\label{eq::appr-grad}
	G^N(p,q;z) \overset{def}{=}\sum_{i=1}^Nw^i(p,q,z)\partial_{p,q}l(p,q,D^i) 
\end{equation}

\end{definition}

Note that in newsvendor pricing model, the approximate gradient is actually an approximation to all elements in the subgradient set. Next, we state the following proposition, which formally shows that the approximate gradient converge to the expectation of cost gradient.

\begin{proposition}
	\label{prop::grad-converge}
	Suppose the joint distribution of $z, p, q$ is absolutely continuous and has density bounded away from 0 to $\infty$ on the support of $p,q,z$ and twice continuously differentiable. And suppose the dataset $\{p^i,q^i,D^i\}$ comes from an iid process. Then for arbitrary $z$, we have 
	\begin{equation*}
		\lim_{N\rightarrow +\infty}\sup_{p,q}\left\Vert G^N(p,q,z) - \mathbb{E}_{D,Q\sim f_{D}(p,z)}[\partial_{p,q}l(p,q,D)]\right\Vert = 0
	\end{equation*}
\end{proposition}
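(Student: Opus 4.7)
The plan is to view $G^N(p,q;z)$ as a local averaging (Nadaraya--Watson style) estimator of the conditional expectation $\mu(p,q;z) := \mathbb{E}[\partial_{p,q}\pi(p,q,D)\mid P=p,\,Q=q,\,Z=z]$, and then upgrade pointwise consistency to uniform convergence over $(p,q)$. The overall strategy has three layers: (i) pointwise consistency of the weighted sample approximation, (ii) a uniform-in-$N$ modulus of continuity for $G^N$ together with continuity of $\mu$, and (iii) a standard $\varepsilon$-net argument on the compact region of $(p,q)$ induced by the bounded support.

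First, for a fixed query $(p,q,z)$, the weighted sum $\sum_i w^i(p,q,z)\,\partial_{p,q}\pi(p,q,D^i)$ is precisely a local averaging estimator of $\mu(p,q;z)$ with response vector $\partial_{p,q}\pi(p,q,D^i)$. Under the density assumptions (absolutely continuous, bounded away from $0$ and $\infty$, twice continuously differentiable) and the i.i.d.\ sampling assumption, classical nonparametric regression results then deliver pointwise consistency for each of the four weight families: Stone's theorem for the kNN weights, standard Nadaraya--Watson consistency for kernel weights, and the tree-based consistency theorems (e.g.\ Biau--Scornet type results) for CART and random-forest weights. Second, I would show continuity of $\mu$ in $(p,q)$ by dominated convergence: the integrand $\partial_{p,q}\pi(p,q,D)$ is bounded and piecewise smooth, with the only non-smoothness a jump of size $(p-s)$ at $q=D$, and because $D$ has a continuous conditional density, the jump set has measure zero for each fixed $q$ and contributes continuously to the integral.

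The main obstacle is step (ii) for the kNN and CART weights, since $w^i(p,q,z)$ can itself jump as $(p,q)$ varies (a sample can enter or leave the $k$-nearest-neighbour set, or switch leaves in the tree). The plan is to bound these oscillations on a ball of radius $\delta$ around $(p,q)$ by the fraction of samples whose kNN rank or tree assignment is ambiguous within that ball; under the density conditions this fraction is $O(\delta)$ in expectation plus an $o(1)$ stochastic term, which yields a uniform modulus of continuity for $G^N$ with high probability. A parallel bound for the jump contributions $\mathbb{I}\{q>D^i\}$ in $\partial_{p,q}\pi(p,q,D^i)$ follows from the same density regularity, since the empirical fraction of $D^i$ lying in a $\delta$-window around $q$ is $O(\delta)$ uniformly. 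Random-forest weights only smooth these effects and are therefore easier. Finally, combining pointwise convergence at a finite $\varepsilon$-net of the compact $(p,q)$ region with the uniform modulus delivers $\sup_{p,q}\Vert G^N - \mu\Vert \to 0$. The non-uniqueness of the subgradient at $q=D$ noted after equation (\ref{eq::profit-grad}) is harmless here: the ambiguous set has Lebesgue measure zero under the continuous density, so every measurable selection integrates to the same $\mu$.
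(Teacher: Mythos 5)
Your proposal takes a genuinely different route from the paper, and the route has a flaw at exactly the step you identify as the main obstacle. The paper does not re-derive any nonparametric theory: its proof of Proposition \ref{prop::grad-converge} consists of invoking Theorem EC.9 of \cite{Bertsimas-2019} and verifying its hypotheses (ignorability of $D$ given $(p,z)$, well-definedness/boundedness of the gradient family, equicontinuity, and compactness of the feasible region for $(p,q)$). You instead attempt a self-contained reconstruction: pointwise consistency from classical results, a uniform-in-$N$ modulus of continuity for $G^N$ in the query point, and a fixed $\varepsilon$-net. Your peripheral observations are sound and in fact mirror the paper's assumption-checking — ignorability makes conditioning on $q$ harmless, $\mu$ is continuous by dominated convergence, and the subgradient ambiguity at $q=D$ is a measure-zero issue. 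The defect is in the uniformity argument itself.

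Concretely, your claim that the fraction of samples with ambiguous kNN rank in a $\delta$-ball is $O(\delta)$ uses the wrong normalization. What controls the oscillation of $G^N$ is the number of swapped samples divided by $k$, since each swap changes the weighted sum by $\Theta(1/k)$ times the response bound. If $R_k$ denotes the kNN radius at the query, with $R_k \asymp (k/(Nf))^{1/m}$ for covariate dimension $m$, then displacing the query by $\delta$ swaps on the order of $NfR_k^{m-1}\delta$ samples, so the oscillation of $G^N$ is of order $\delta/R_k$, not $\delta$. Pointwise consistency (Stone's theorem) forces $k\to\infty$ and $k/N\to 0$, hence $R_k\to 0$, so for any \emph{fixed} $\delta$ this bound diverges as $N\to\infty$: no modulus of continuity uniform in $N$ exists, and the same degeneracy (with $1/h$ in place of $1/R_k$) afflicts kernel weights, which you did not even flag as problematic. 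Consequently the scheme ``fixed $\varepsilon$-net plus uniform modulus'' cannot close. The repair is to refine the net at a rate $\delta_N \ll R_k$, which makes the number of net points grow polynomially in $N$, and then pointwise consistency is no longer enough — one needs exponential concentration at each net point so a union bound over the growing net survives, which in turn requires growth conditions on the tuning parameters ($k/\log N\to\infty$; $h\to 0$ with $Nh^{m}/\log N\to\infty$; analogous restrictions for CART and forests) that appear nowhere in your sketch. This covering-plus-concentration machinery, together with those tuning conditions, is precisely what the cited theorem in \cite{Bertsimas-2019} packages, and is why the paper's proof reduces to checking assumptions rather than re-proving uniform consistency.
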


Proposition \ref{prop::grad-converge} shows the convergency of our approximate approach. We can then develop gradient-based algorithms by this approximate gradient. But before proceeding to the application of the approximate gradient, we should remark the difference between the following two concepts: the \textit{expectation of cost gradient} and \textit{gradient of objective expectation}. This two concepts are different because of the decision-dependent effect.

\begin{proposition}
	\label{prop::fail-converge-grad}
	Generally, the expectation of cost gradient $\mathbb{E}_{D\sim f_{D}(p,z)}[\partial_{p,q}l(p,q,D)]$ is not equal to the gradient of objective expectation $\partial_{p,q}\mathbb{E}_{D\sim f_{D}(p,z)}[l(p,q,D)]$. And $G^N(p,q,z)$ fails to converge to the subgradient of objective expectation.
\end{proposition}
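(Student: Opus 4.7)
The plan is to derive the inequality via differentiation under the integral and then read off the convergence failure from Proposition \ref{prop::grad-converge}. Writing the expectation as an integral against the density $f_D(\cdot;p,z)$ and invoking the Leibniz rule,
\begin{equation*}
\partial_{p}\mathbb{E}_{D\sim f_D(p,z)}[\pi(p,q,D)] \;=\; \mathbb{E}_{D\sim f_D(p,z)}[\partial_{p}\pi(p,q,D)] \;+\; \int \pi(p,q,D)\,\partial_{p} f_D(D;p,z)\,dD ,
\end{equation*}
where the second ``score'' term encodes the decision-dependent shift of the density. In the $q$-coordinate the density does not depend on the variable, so the extra term vanishes and the two quantities agree there. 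Consequently, proving the first claim reduces to exhibiting a single $(p,q,z)$ at which the score term in the $p$-coordinate is nonzero.

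To produce such an instance I would use the standard additive demand model $D=a-bp+\epsilon$ with $b>0$ and $\epsilon$ absolutely continuous with cdf $F$, which is one of the workhorse demand forms cited in Section \ref{sec::literature}. Setting $u(p)=q-a+bp$ so that $\{D\le q\}=\{\epsilon\le u\}$, a direct computation gives on one hand $\mathbb{E}[\partial_{p}\pi(p,q,D)]=\mathbb{E}[D\wedge q]$, and on the other hand, after carrying the derivative through the three smooth pieces of $\pi$ and using $\partial_{p}u=b$ (so that the boundary terms cancel),
\begin{equation*}
\partial_{p}\mathbb{E}[\pi(p,q,D)] \;=\; \mathbb{E}[D\wedge q] \;-\; b(p-s)\,F(u) .
\end{equation*}
The gap $-b(p-s)\,F(u)$ is strictly nonzero whenever $p>s$, $b>0$ and $F(u)\in(0,1)$, which establishes the strict inequality in general.

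The second statement is then immediate. Proposition \ref{prop::grad-converge} already certifies that $G^N(p,q;z)$ converges uniformly in $(p,q)$ to $\mathbb{E}_{D\sim f_D(p,z)}[\partial_{p,q}\pi(p,q,D)]$. If $G^N$ additionally converged to $\partial_{p,q}\mathbb{E}[\pi(p,q,D)]$ at the specific $(p,q,z)$ singled out by the counterexample, the two limits would have to coincide, contradicting the first part. Hence $G^N$ cannot in general converge to the subgradient of the objective expectation.

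The main obstacle is the non-smoothness of $\pi$ in $D$ at the kink $D=q$: both sides of the Leibniz identity need a careful subgradient-selection argument before termwise differentiation is legitimate. I would handle this by splitting the expectation on $\{D<q\}$ and $\{D>q\}$, which are smooth pieces, and appealing to the absolute continuity of the demand distribution so that $\{D=q\}$ contributes measure zero and any choice of element from $\partial_{p,q}\pi$ gives the same integral. Under the bounded-density and twice-differentiability assumptions already imposed in Proposition \ref{prop::grad-converge}, dominated convergence justifies the interchange in the score-term derivation. Beyond that, everything reduces to the routine counterexample computation sketched above.
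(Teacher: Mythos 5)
Your proposal is correct, and its core is the same as the paper's: both write the objective expectation as an integral, differentiate under the integral sign, and split the result into the expectation of the profit gradient plus a ``score'' term $\int \pi\,\partial_{p} f_D\,dD$ coming from the decision-dependent density. Where you genuinely go beyond the paper is in what happens after that decomposition. The paper simply asserts that ``generally, the second term is not $0$'' and stops; you instead reduce the claim to exhibiting one instance and carry out the computation in the additive demand model $D=a-bp+\epsilon$, obtaining the explicit gap $-b(p-s)F(u)$, which is strictly nonzero whenever $p>s$, $b>0$ and $F(u)\in(0,1)$ (your boundary-term cancellation via $\partial_p u = b$ checks out, and the discrepancy correctly lives only in the $p$-coordinate since $f_D$ is independent of $q$). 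Likewise, for the second claim the paper says only ``thus the convergence of approximate gradient fails,'' whereas you make the argument explicit: Proposition \ref{prop::grad-converge} forces $G^N$ to converge to $\mathbb{E}_{D\sim f_D(p,z)}[\partial_{p,q}\pi]$, so a second limit equal to $\partial_{p,q}\mathbb{E}[\pi]$ would contradict part one by uniqueness of limits. Your handling of the kink at $D=q$ (splitting on $\{D<q\}$ and $\{D>q\}$ and using absolute continuity so the choice of subgradient element is immaterial) addresses a point the paper glosses over entirely. In short: same decomposition, but your version substantiates the two steps the paper leaves as assertions, at the modest cost of committing to a specific demand model for the counterexample.
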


Proposition \ref{prop::fail-converge-grad} indicates that, unfortunately, our approximate gradient cannot converge to the true gradient of the objective function. But in the next section, we will show that (1) the stationary point of the expectation of gradient is a necessary condition for the optimal solution, and (2) if the cost function is convex or strong convex, the approximate gradient ascent method can generate solution sequence with bounded error.

Now we list the AGD algorithm in Algorithm \ref{algo::approx-frand-wolfe}. Note that the step sizes also need confirmation in order to implement AGD algorithm. We adopt the diminishing step size and Armijo step size in our work. The analysis on the selection of step size can be seen in the following section and numerical experiment.

\begin{algorithm}[!ht]
	\renewcommand{\algorithmicrequire}{\textbf{Input:}}
	\renewcommand{\algorithmicensure}{\textbf{Output:}}
	\caption{Approximate Gradient Descent Algorithm}
	\label{algo::approx-frand-wolfe}
	\begin{algorithmic}[1]
		\Require initial solution $p^0,q^0$, step size $\eta^r$, dataset $\{p^i,z^i,D^i\}_N$;
		\Ensure solution $\hat p^*, q^*$;
		\State $r=0$;
		\While {Stop criteria not satisfied}
			\State Calculate approximate subgradient $G^N(p^r_N,q^r_N;z)$ by (\ref{eq::appr-grad});
			\State Calculate $(p^{r+1}_N,q^{r+1}_N)=\left((p^r_N,q^r_N)-\eta^rG^N(p^r_N,q^r_N;z)\right)^+$;
			\State $r = r + 1$;
		\EndWhile
		\State\Return $p^r_N,q^r_N$;
	\end{algorithmic}
\end{algorithm}

\section{Convergence Analysis of the AGD Algorithm}\label{sec::analysis}


In this section, we investigate the convergence of the sequence ${p^r,q^r}$ generated by the AGD algorithm. Intuitively, the convergence results are similar to the typical gradient (or subgradient) methods if the number of samples is sufficient, since we have proved in Proposition \ref{prop::grad-converge} that the approximate gradient is close to the expectation of gradient. The convergence analysis is comprised of three parts. In Section \ref{sec::preliminaries}, we state some denotations and prerequisite assumptions. In Section \ref{sec::convergence-nonconvex}, we first prove the convergence result of AGD algorithm with both diminishing and armijo step size. This result does not require the convexity of cost function, thus can be used to solve our newsvendor pricing model (\ref{eq::true-obj-pi}). We also investigate the price-only and price-adjustment cost models which is convex and strongly-convex and show the error bound in both conditions in Section \ref{sec::convergence-convex}. 

\subsection{Preliminaries}\label{sec::preliminaries}

We first introduce some denotations and assumptions for the following analysis. To simplify the denotation, we use $x = (p, q)$ to denote all decision variables In the following analysis. Thus the cost function becomes $l(x,D)$. The objective is to minimize $\E{D}{x}{l}$, where $f_D(x,D)$ denotes the probability density function of $D$. 

We then introduce some assumptions that are used in both nonconvex and convex occasions. 

\begin{assumption}[Defferentiate-integrate exchange]\label{assum::switch-int}
	We assume that $l(x,D)$ is differentiable in $x$, and the gradient is bounded by an $L^1(D)$ function $g(D)$. That is, $\exists g\in L^1$, $\Vert \nabla_xl(x,D)\Vert\leq g(D)$ for all $x,D$. 
\end{assumption}

where $L^1(D)$ denotes the set of functions that are integrable in $D$ almost everywhere \citep{real-analysis-Folland}. Assumption \ref{assum::switch-int} implicates that we can change the order of integration and derivation when calculating the derivative of the integral of $l(x,D)$ (see Theorem 2.27 in \cite{real-analysis-Folland}). That is, 

\begin{equation*}
	\nabla_x\int_Dl(x,D)f_D(x,D)dv = \int_D\nabla_x (l(x,D)f_D(x,D))dv,
\end{equation*}

which enables us to access the derivative of objective function. This assumption is reasonable since most cost functions are integrable almost everywhere in practice. In the following text, we assume that the cost function satisfies Assumption \ref{assum::switch-int}.

We also assume that the distance between the decision-dependent distributions under different decisions can be bounded by the distance between the two decisions.

\begin{assumption}[$\epsilon$-sensitivity]\label{assum::eps-sensitive}
	We assume that the distribution map $f_D(\cdot,D)$ is $\epsilon$-sensitive. That is, for all $x_1, x_2\in X$, 

	\begin{equation}\label{eq::eps-sensitive}
		W_1(f_D(x_1,D), f_D(x_2,D))\leq\epsilon\Vert x_1-x_2\Vert_2
	\end{equation}

	where $W_1$ denotes the earth mover's distance \citep{earth-mover-distance}.
\end{assumption}

This assumption has been mentioned in \cite{DAU5-Perdomo2020}. Intuitively, Assumption \ref{assum::eps-sensitive} ensures that the difference between decision-dependent distributions is not too large under different decisions. Therefore, when analyzing the gap between the approximate solution and the optimal solution, we can convert the difference between expectations under different distributions into the distance between decision variables. 

\subsection{Convergence Under Nonconvex Condition}\label{sec::convergence-nonconvex}

In this section, we focus on the convergence result under general cases. We first establish the convergence to a stationary point of the cost gradient expectation for diminishing step size, and propose that the converging points of armijo step size have bounded gradient expectation. We then prove a necessary condition for optimality, which connects the two convergence results with the optimality property. Finally, we extend the convergence result to the non-smooth newsvendor pricing model.

We first introduce some assumptions that is used to prove the nonconvex convergence results. 

\begin{assumption}[Same limited range]\label{assum::same-limit-range}
The value range of random parameter $D$ remains the same under any $x$. And the value range of $D$ is limited. 
\end{assumption}

Assumption \ref{assum::same-limit-range} can be satisfied in practical settings. We can take the union set of the value ranges of $D$ under different $x$ and assign the probability outside of the distribution $D|x$ as $0$. In practice, the demand is positive and we can often obtain a maximum demand, thus Assumption \ref{assum::same-limit-range} holds. We denote the range and its volume as $\Omega$ and $S_\Omega$.

\begin{assumption}[Lipschitz continuous and Lipschitz gradient]\label{assum::lip}
The cost function $l(x,D)$ is smooth, Lipschitz continuous with Lipschitz gradient. And the probability density function (pdf) of random parameter $D$ has Lipschitz gradient. That is, for any $x,y\in X$, we have
\begin{flalign*}
&\ (a) \frac{|l(x,D)-l(y,D)|}{|x-y|}\leq L_1, \quad\forall D\in \Omega &\\
&\ (b) \frac{\Vert \nabla_x l(x,D)-\nabla_x l(y,D) \Vert}{|x-y|}\leq L_2, \quad\forall D\in\Omega &\\
&\ (c) \frac{\Vert \nabla_x f_D(x, D)-\nabla_x f_D(y,D) \Vert}{|x-y|}\leq L_3, \quad\forall D\in\Omega&
\end{flalign*}
\end{assumption}

Assumption \ref{assum::lip} is reasonable since the value ranges of decision variables are usually bounded in practice. In the newsvendor pricing problem, the cost function $l(p, q, D)$ is also piecewise linear in $p$ and $q$ separately. We also assume that the distribution function of random parameters has a limited change rate when the decision $x$ changes. 

\begin{assumption}[Bounded cost and distribution gradient]\label{assum::bound}
The absolute value of cost function and the gradient of the pdf of random parameter is bounded. In words, for any $x\in X$ and $D\in\Omega$, $|l(x,D)|\leq L_4$ and $\Vert\nabla_x f(x, D)\Vert\leq L_5$. 
\end{assumption}

Assumption \ref{assum::bound} can also be satisfied in practice since the cost is limit given a reasonable decision range. We also assume that the distribution change is smooth in $x$. In our newsvendor pricing model, the mild change in price decision will not cause sudden change in demand.

Now we first establish the theoretical convergence result of AGD under nonconvex and smooth conditions. 

\begin{proposition}[Convergence under diminishing step size]
	\label{prop::converge-stationary}
	Under assumptions \ref{assum::switch-int} - \ref{assum::bound}, if the cost function $l(x, D)$ is twice differentiable in $x$ and the step size $\eta^r$ is diminishing with $\sum_{r=0}^\infty \eta^r=\infty$, the sample size $N$ is sufficiently large and the value range volume $S_\Omega$ is sufficiently small, then any limit point of the sequence generated by AGD algorithm is a stationary point of the cost gradient expectation.
	
	\begin{equation}\label{eq::converge-stationary}
	\mbox{if } \lim_{N\rightarrow\infty}\lim_{r(\in\mathcal{K})\rightarrow\infty}x^r_N=\bar x, \mbox{ then }\mathbb{E}_{D\sim f_D(\bar x, D)} [\nabla_x l(\bar x, D)] = 0
	\end{equation}
\end{proposition}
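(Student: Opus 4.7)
The plan is to combine the standard descent-lemma analysis for smooth gradient ascent with diminishing step size with the two controlled approximation errors intrinsic to the decision-dependent setting: the sampling error $G^N(x;z) - H(x)$, where $H(x) := \mathbb{E}_{v\sim f_v(x,v)}[\nabla_x l(x,v)]$, which is made uniformly small by Proposition~\ref{prop::grad-converge} once $N$ is large, and the decision-dependent residual separating $H(x)$ from the true gradient $\nabla F(x) := \nabla_x \mathbb{E}_{v\sim f_v(x,v)}[l(x,v)]$ of the objective $F$.

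First I would establish the decomposition $\nabla F(x) = H(x) + R(x)$ with $R(x) := \int_\Omega l(x,v)\,\nabla_x f_v(x,v)\,dv$. This uses Assumption~\ref{assum::switch-int} to exchange derivative and integral, and Assumption~\ref{assum::same-limit-range} so that the domain $\Omega$ does not itself depend on $x$. Assumption~\ref{assum::bound} then gives $\|R(x)\| \le L_4 L_5 S_\Omega$, so the residual is uniformly small when $S_\Omega$ is small. The Lipschitz hypotheses in Assumption~\ref{assum::lip} together with twice-differentiability of $l$ let me verify that $\nabla F$ is globally Lipschitz on $X$ with some constant $L$, which is what the descent lemma needs.

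Next I would apply the descent lemma to $F$ along the AGD update $x^{r+1} = x^r + \eta^r G^N(x^r;z)$, obtaining
\[
F(x^{r+1}) \ge F(x^r) + \eta^r \langle \nabla F(x^r), G^N(x^r;z)\rangle - \tfrac{L}{2}(\eta^r)^2 \|G^N(x^r;z)\|^2.
\]
Substituting $\nabla F = H + R$, using the uniform bound $\|G^N - H\|\le \varepsilon$ from Proposition~\ref{prop::grad-converge} and a uniform bound on $\|G^N\|$ inherited from Assumption~\ref{assum::lip}(a), this rearranges to
\[
F(x^{r+1}) - F(x^r) \ge \eta^r\bigl(\|H(x^r)\|^2 - C(\varepsilon + S_\Omega)\bigr) - C'(\eta^r)^2,
\]
for constants $C,C'$ depending on $L_1,L_2,L_4,L_5$. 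Telescoping and using boundedness of $F$ from Assumption~\ref{assum::bound}, together with $\eta^r\to 0$ and $\sum_r \eta^r = \infty$, force $\liminf_r \|H(x^r)\|^2 \le C(\varepsilon + S_\Omega)$.

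Finally I would argue by contradiction for the limit-point statement. If some limit point $\bar x$ had $\|H(\bar x)\| = \delta > 0$, then continuity of $H$ (which follows from Assumption~\ref{assum::eps-sensitive} via Wasserstein duality together with Assumption~\ref{assum::lip}) gives $\|H(\cdot)\|\ge \delta/2$ on a neighborhood visited infinitely often. A local version of the telescoping inequality would then produce $F$-growth proportional to the divergent partial sums $\sum_r \eta^r$ on those visits, contradicting boundedness of $F$; then letting $\varepsilon, S_\Omega \to 0$ closes the gap and yields $H(\bar x) = 0$. The main obstacle is ensuring that the slack $C(\varepsilon+S_\Omega) + C'\eta^r$ in the per-step inequality never swamps the coercive $\|H(x^r)\|^2$ term near a candidate non-stationary limit point; this requires carefully interleaving the thresholds on $N$, on $S_\Omega$, and on the tail of $\eta^r$, and additionally handling the projection onto the non-negative orthant in Algorithm~\ref{algo::approx-frand-wolfe} via a standard gradient-mapping substitution when $\bar x$ lies on the boundary.
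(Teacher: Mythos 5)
Your proposal follows essentially the same route as the paper's own proof: show the true objective $F$ has Lipschitz gradient, decompose $\nabla F(x) = H(x) + R(x)$ with the decision-dependent residual $R(x)=\int_\Omega l(x,v)\nabla_x f_v(x,v)\,dv$ bounded by $L_4L_5S_\Omega$, apply the descent lemma along the AGD iterates, and telescope using $\sum_r \eta^r = \infty$ to force the expected gradient to vanish at limit points. The differences are only in bookkeeping: the paper passes to the limit $N\to\infty$ first (replacing $G^N$ exactly by $H$ via Proposition \ref{prop::grad-converge}) and controls the cross term through the norm identity relating $\Vert \nabla F\Vert^2$, $\Vert H\Vert^2$ and $\Vert R\Vert^2$, whereas you retain the finite-$N$ error $\varepsilon$ and the boundary projection explicitly; your neighborhood-based contradiction argument for the limit-point conclusion is, if anything, more careful than the paper's direct appeal to summability.
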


Proposition \ref{prop::converge-stationary} provides that the AGD algorithm converges to zero points of expected gradient. Notice that Proposition \ref{prop::converge-stationary} relies on Assumptions \ref{assum::same-limit-range} and \ref{assum::bound}, which are strong conditions and may cause the algorithm fails to provide a good convergence rate. When the constants in these assumptions, such as $\Omega, L_4, L_5$ is large, the convergence will fail. Therefore, we then state the convergence result under armijo step size which require a milder condition.

\begin{proposition}[Convergence under armijo step size]
	\label{prop::converge-stationary-armijo}
	Under Assumptions \ref{assum::switch-int}, \ref{assum::eps-sensitive} and \ref{assum::lip}(a), suppose the AGD algorithm adopts armijo step size with $\sigma$, and the sample size $N$ is sufficiently large, then any limit point $\bar x$ of the sequence generated by AGD algorithm has a bounded gradient expectation.

	\begin{equation}\label{eq::converge-stationary-armijo}
		\mbox{if } \lim_{N\rightarrow\infty}\lim_{r(\in\mathcal{K})\rightarrow\infty}x^r_N=\bar x, \mbox{ then }\Vert\mathbb{E}_{D\sim f_D(\bar x, D)} [\nabla_x l(\bar x, D)]\Vert \leq \frac{\epsilon L_1}{1-\sigma}
	\end{equation}

\end{proposition}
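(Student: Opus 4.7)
The plan is to argue by contradiction: assume some limit point $\bar x$ violates the claimed bound, i.e.\ $\|\mathbb{E}_{v\sim f_v(\bar x,v)}[\nabla_x l(\bar x,v)]\| > \epsilon L_1/(1-\sigma)$, and derive a contradiction with the Armijo sufficient-ascent condition. The first step is to relate the \emph{expectation of the gradient} (which is what $G^N$ approximates by Proposition \ref{prop::grad-converge}) to the \emph{gradient of the true objective} $F(x):=\mathbb{E}_{v\sim f_v(x,v)}[l(x,v)]$. Writing
\begin{equation*}
\nabla F(x) \;=\; \mathbb{E}_{v\sim f_v(x,v)}[\nabla_x l(x,v)] \;+\; \int l(x,v)\,\nabla_x f_v(x,v)\,dv,
\end{equation*}
the second term is the ``distributional derivative'' created by the decision-dependent effect. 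Using Kantorovich--Rubinstein duality together with the $L_1$-Lipschitz property in Assumption \ref{assum::lip}(a) and the $\epsilon$-sensitivity of Assumption \ref{assum::eps-sensitive}, I will show that this extra term has norm at most $L_1\epsilon$. Concretely, $y\mapsto \mathbb{E}_{v\sim f_v(y,v)}[l(x,v)]$ is $L_1\epsilon$-Lipschitz, so its gradient in $y$ (evaluated at $y=x$) is bounded by $L_1\epsilon$.

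Next I invoke standard Armijo analysis. The Armijo rule implies $F(x^{r+1}_N)\ge F(x^r_N)+\sigma\eta^r\|G^N(x^r_N)\|^2$ for each iteration, and since $F$ is bounded above we obtain $\sum_r \eta^r\|G^N(x^r_N)\|^2<\infty$. Along the subsequence $\mathcal{K}$ with $x^r_N\to\bar x_N$, Proposition \ref{prop::grad-converge} ensures that as $N\to\infty$ the values $\|G^N(x^r_N)\|$ stay bounded away from zero (since by assumption the limit expectation has norm $>\epsilon L_1/(1-\sigma)>0$). This forces $\eta^r\to 0$ along the subsequence for all sufficiently large $N$. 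Backtracking then tells me the trial step $\eta^r/\beta$ must have failed the Armijo test, i.e.
\begin{equation*}
F\!\left(x^r_N+\tfrac{\eta^r}{\beta}G^N(x^r_N)\right)-F(x^r_N) \;<\; \sigma\,\tfrac{\eta^r}{\beta}\,\|G^N(x^r_N)\|^2 .
\end{equation*}
Dividing by $\eta^r/\beta$ and passing to the limit first in $r\in\mathcal{K}$ and then in $N$ (using smoothness of $F$ via Assumption \ref{assum::lip}(a), which transfers to $\nabla F$ via Assumption \ref{assum::switch-int}) yields $\nabla F(\bar x)^{\top}G^\infty(\bar x)\le \sigma\|G^\infty(\bar x)\|^2$, where $G^\infty(\bar x):=\mathbb{E}_{v\sim f_v(\bar x,v)}[\nabla_x l(\bar x,v)]$.

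Finally, substituting the decomposition from the first paragraph, $\nabla F(\bar x)=G^\infty(\bar x)+R(\bar x)$ with $\|R(\bar x)\|\le L_1\epsilon$, and applying Cauchy--Schwarz gives
\begin{equation*}
\|G^\infty(\bar x)\|^2 - L_1\epsilon\,\|G^\infty(\bar x)\| \;\le\; \sigma\,\|G^\infty(\bar x)\|^2,
\end{equation*}
hence $(1-\sigma)\|G^\infty(\bar x)\|\le L_1\epsilon$, contradicting the assumption. The main obstacle I anticipate is the careful handling of the double limit $\lim_{N\to\infty}\lim_{r(\in\mathcal{K})\to\infty}$: the approximate gradient $G^N$ is generally discontinuous (e.g.\ for kNN and CART weights), so the passage from $\nabla F(x^r_N)^\top G^N(x^r_N)$ to $\nabla F(\bar x)^\top G^\infty(\bar x)$ must route through Proposition \ref{prop::grad-converge}'s uniform-in-$(p,q)$ convergence rather than pointwise continuity of $G^N$. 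A secondary technicality is managing the projection $(\,\cdot\,)^+$ in the AGD update, which I will handle by noting that on interior limit points the projection is inactive and on boundary limit points the Armijo inequality carries through using the projected search direction, leaving the final scalar inequality unchanged.
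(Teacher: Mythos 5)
Your overall strategy --- Armijo monotonicity forces $\eta^r \to 0$, the last failed backtracking trial yields a directional-derivative inequality, and Kantorovich--Rubinstein duality combined with $\epsilon$-sensitivity and $L_1$-Lipschitzness contributes the $\epsilon L_1$ term --- is the same skeleton as the paper's proof, and your concluding algebra $(1-\sigma)\Vert G^\infty(\bar x)\Vert \le L_1\epsilon$ is exactly the paper's final inequality (the paper argues directly rather than by contradiction, which is cosmetic). The genuine gap is your pivot through the gradient of the true objective $F(x)=\mathbb{E}_{v\sim f_v(x,v)}[l(x,v)]$. Under the proposition's stated hypotheses --- Assumptions \ref{assum::switch-int}, \ref{assum::eps-sensitive} and \ref{assum::lip}(a) --- the function $F$ need not be differentiable at all: $\epsilon$-sensitivity only makes $x \mapsto f_v(x,\cdot)$ Lipschitz in the Wasserstein metric, not smooth. (Take $l(x,v)=v$ and $f_v(x,\cdot)$ a unit-variance Gaussian density with mean $|x|$; all three assumptions hold, yet $F(x)=|x|$ is nondifferentiable at $0$ and $\nabla_x f_v$ does not exist there.) So your claim that ``smoothness of $F$ \ldots transfers to $\nabla F$ via Assumption \ref{assum::switch-int}'' is false, and the three steps resting on it --- the decomposition $\nabla F = G^\infty + R$, the mean-value/limit passage to $\nabla F(\bar x)^{\top}G^\infty(\bar x)\le\sigma\Vert G^\infty(\bar x)\Vert^2$, and the final Cauchy--Schwarz substitution --- are not available. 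The bound $\Vert R\Vert\le L_1\epsilon$ you want is the differentiated form of a statement that the assumptions guarantee only in Lipschitz (zeroth-order) form.

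The paper's proof is engineered precisely to avoid this. Instead of differentiating $F$, it adds and subtracts $\mathbb{E}_{v\sim f_v(x^r)}[l(x^{r+1},v)]$ inside the failed Armijo inequality: the piece $\mathbb{E}_{v\sim f_v(x^r)}[l(x^{r},v)-l(x^{r+1},v)]$ is a difference under a \emph{fixed} distribution, so the mean value theorem applies using only Assumption \ref{assum::switch-int} (differentiability of $l$, never of $f_v$), while the piece $\mathbb{E}_{v\sim f_v(x^r)}[l(x^{r+1},v)]-\mathbb{E}_{v\sim f_v(x^{r+1})}[l(x^{r+1},v)]$ is a distribution shift at a fixed argument, bounded by $\epsilon L_1\Vert x^{r+1}-x^r\Vert$ via Lemma \ref{lem::eps-distribution-distance}. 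That is exactly your $R$-term estimate, but applied at the level of function differences, where only Lipschitzness is required. Your argument can be repaired by replacing the gradient decomposition with this function-level decomposition --- and the repaired argument is the paper's proof. (Two smaller glosses are shared by both arguments and I do not count them against you: forcing $\eta^r\to 0$ really needs $\sigma>0$, and both proofs apply the Armijo test to the true rather than the approximate objective.)
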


Unlike diminishing step, the expected gradient for armijo step is not guaranteed to converge to $0$, but its convergence can hold under a milder condition, where Assumptions \ref{assum::same-limit-range} and \ref{assum::bound} may not hold. We will see in Section \ref{sec::exp-convergence-perform} that the armijo step size is more useful than the diminishing step size in most practical cases.

The convergence results above are both relevant to the expected gradient. However, the expected gradient does not equal to the gradient of objective according to Proposition \ref{prop::fail-converge-grad}. In other words, the converged points are not the stationary point of objective function. So we need to further investigate the relationship between the converged point and optimal solution.

\begin{theorem}[Necessary condition of optimality]\label{theo::nece-cond}
If $x^*$ is the optimal solution for a decision-dependent problem $\min_{x} \E{D}{x}{l}$, where $l$ is an $L_1$ Lipschitz function and Assumptions \ref{assum::switch-int} and \ref{assum::eps-sensitive} are satisfied, then $\Vert \Enabla{D}{x^*}{l} \Vert \leq L_1\epsilon$.
\end{theorem}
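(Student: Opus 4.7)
The strategy is to \emph{decouple} the two channels through which $x$ affects the objective $F(x) := \mathbb{E}_{v\sim f_v(x,v)}[l(x,v)]$: the direct effect through the integrand $l(x,\cdot)$, and the indirect effect through the distribution $f_v(x,\cdot)$. To this end I would introduce the auxiliary function
\[
G(x,y) \;:=\; \mathbb{E}_{v\sim f_v(y,v)}[l(x,v)],
\]
so that $F(x)=G(x,x)$. Under Assumption \ref{assum::switch-int}, differentiation and expectation commute, giving $\nabla_x G(x^*,x^*) = \mathbb{E}_{v\sim f_v(x^*,v)}[\nabla_x l(x^*,v)]$, which is precisely the quantity we want to bound.

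Next, I would invoke the first-order optimality condition for $F$. Applying the chain rule to $F(x) = G(x,x)$ at the interior optimum $x^*$ yields
\[
0 \;=\; \nabla F(x^*) \;=\; \nabla_x G(x^*,x^*) + \nabla_y G(x^*,x^*),
\]
so $\mathbb{E}_{v\sim f_v(x^*,v)}[\nabla_x l(x^*,v)] = -\nabla_y G(x^*,x^*)$. It is therefore enough to show $\bigl\Vert\nabla_y G(x^*,x^*)\bigr\Vert \le L_1\epsilon$.

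This bound on $\nabla_y G$ would follow from a Lipschitz-in-$y$ estimate obtained via Kantorovich--Rubinstein duality. Treating $l(x^*,\cdot)$ as an $L_1$-Lipschitz test function and then invoking $\epsilon$-sensitivity (Assumption \ref{assum::eps-sensitive}) gives, for all $y_1,y_2$,
\[
\bigl|G(x^*,y_1)-G(x^*,y_2)\bigr| \;\le\; L_1\, W_1\!\bigl(f_v(y_1,v),\, f_v(y_2,v)\bigr) \;\le\; L_1\epsilon\,\Vert y_1-y_2\Vert.
\]
Hence $G(x^*,\cdot)$ is $L_1\epsilon$-Lipschitz, which implies $\bigl\Vert\nabla_y G(x^*,x^*)\bigr\Vert\le L_1\epsilon$ wherever this derivative exists. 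Combining this estimate with the optimality identity from the previous paragraph closes the argument.

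The main obstacle is a careful reading of the Lipschitz hypothesis on $l$: the Kantorovich--Rubinstein step specifically requires $l(x^*,\cdot)$ to be $L_1$-Lipschitz \emph{in the random argument} $v$, whereas several earlier assumptions in the paper phrase Lipschitzness primarily in the decision $x$; I would clarify at the outset that the constant $L_1$ in this theorem refers to Lipschitzness in $v$ (or joint Lipschitzness in $(x,v)$), and verify that this property does in fact hold for the newsvendor profit $\pi(p,q,D)$ on the relevant range of $D$. A secondary technicality is that the clean identity $\nabla F(x^*)=0$ presumes $x^*$ is an interior optimum of the feasible region; for boundary optima one would replace it by the variational inequality $\langle \nabla F(x^*),\,x-x^*\rangle\le 0$ for all feasible $x$, combined with the same decoupling, which yields the same bound for the projection of the expected gradient onto feasible directions.
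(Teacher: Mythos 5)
Your decoupling of the direct channel (through $l(x,\cdot)$) and the distributional channel (through $f_v(x,\cdot)$) is exactly the mechanism behind the paper's own proof, and your Kantorovich--Rubinstein step matches the paper's use of Lemma \ref{lem::eps-distribution-distance}. However, your execution has a genuine gap: the chain-rule identity $0=\nabla F(x^*)=\nabla_x G(x^*,x^*)+\nabla_y G(x^*,x^*)$ requires both $F$ and $y\mapsto G(x^*,y)$ to be differentiable at the specific point $x^*$, and neither is guaranteed by Assumptions \ref{assum::switch-int} and \ref{assum::eps-sensitive}. Assumption \ref{assum::switch-int} gives differentiability of $G$ only in its \emph{first} argument; $\epsilon$-sensitivity makes $G(x^*,\cdot)$ Lipschitz, hence differentiable almost everywhere by Rademacher, but the optimizer can sit precisely in the exceptional null set. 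Concretely, take $l(x,v)=v$ and let $f_v(y,\cdot)$ be a unit-variance Gaussian with mean $-\Vert y\Vert$: this is $1$-sensitive and satisfies Assumption \ref{assum::switch-int}, yet $F(x)=-\Vert x\Vert$ is maximized at the interior point $x^*=0$, where neither $\nabla F$ nor $\nabla_y G(0,\cdot)$ exists, so your identity is vacuous there (the theorem's conclusion still holds, since $\mathbb{E}_{v\sim f_v(0,v)}[\nabla_x l(0,v)]=0$). The hedge ``wherever this derivative exists'' therefore does not close the argument.

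The repair is to run your decoupling on one-sided difference quotients rather than gradients, which is what the paper does (in contrapositive form). For a feasible direction $u$ and $t>0$, write $F(x^*+tu)-F(x^*)=\bigl[G(x^*+tu,x^*+tu)-G(x^*+tu,x^*)\bigr]+\bigl[G(x^*+tu,x^*)-G(x^*,x^*)\bigr]$. The first bracket is bounded in absolute value by $L_1\epsilon t$ by your K--R step; the second equals $t\,\mathbb{E}_{v\sim f_v(x^*,v)}[\nabla_x l(x^*,v)]^{T}u+o(t)$ by Assumption \ref{assum::switch-int}, since only the first argument moves and differentiability there \emph{is} available; and optimality forces the sum to be $\leq 0$. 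Dividing by $t$, letting $t\to 0$, and choosing $u$ along $\mathbb{E}_{v\sim f_v(x^*,v)}[\nabla_x l(x^*,v)]$ yields the claimed bound without ever differentiating through the distribution. Your two flagged technicalities are well taken: the constant $L_1$ must indeed be a Lipschitz constant in $v$ for the K--R step, and the boundary-optimum caveat applies equally to the paper's proof, which also picks the perturbation direction along the expected gradient.
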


Theorem \ref{theo::nece-cond} builds connection between the AGD algorithm and optimality. It indicates that one necessary condition for optimality is that the norm of the expected gradient should not be too large. For diminishing step size, the expected gradient will be sufficiently small and thus satisfies the necessary condition. For armijo step size, we notice that the necessary bound in Theorem \ref{theo::nece-cond} is actually the upper bound when $\sigma=0$ in Proposition \ref{prop::converge-stationary-armijo}. Therefore, the AGD algorithm can satisfy the necessary condition of optimality. 

For the newsvendor pricing problem, the convergence results are not strictly held since the cost function $l(x, D)$ is non-smooth on $q$. But we can also apply the AGD algorithm by selecting an element from the subgradient set. And we can also get a good convergence performance since the convergence results still holds when $q^r$ is in any side of $D$. The performance of AGD algorithm on such problem is further validated numerically in Section \ref{sec::exp-convergence-perform}.

\begin{remark}[Generality of the AGD algorithm and the convergence results]
	
	It is important to note that the AGD algorithm and the convergence results are not limited to the newsvendor pricing problem. It can suit for other unconstrained models with decision-dependent property since we do not impose any assumption on the form of objective function or distribution. It also does not rely on the approximation approach. This versatility further motivates its use. 
\end{remark}

\subsection{Convergence Under Convex Condition}\label{sec::convergence-convex}

In this section, we provide theoretical guarantee on the error bound of AGD algorithm when the cost function is convex and strongly convex.  
In the previous section, we conclude that the subsequences given by AGD algorithm satisfy the necessary condition of optimality. However, the optimality cannot be guaranteed in the general case. Therefore, we focus on the convex condition in this section. The goal of this section is, thus, to investigate the theoretical error bound and convergence to the optimal solution in the convex and strongly convex conditions. We also introduce two variants of newsvendor pricing model to connect the convergence analysis to practical scenarios. The analysis on the convex cases can be applied to the price-only model, and the results under the strongly-convex condition can be applied to the price-only model with price-adjustment cost.

We start with some definitions and assumptions. The price-only model shares the same objective function as the pricing model (\ref{eq::true-obj-expand}), but the order quantity $q$ is not a decision variable \citep{price-only-Dada1999}. In this case, the cost function is concave on the decision variable $p$. The price-only model can be written as 

\begin{equation}
	\label{eq::price-only}
	\min_{p} f_p(p;z,q) = \mathbb{E}_{D\sim f_D(p,z)}[l(p,q,D)]
\end{equation}

The price-only model is still decision-dependent since the demand distribution is affected by the price decision. Note that solving the price-only model is also helpful for solving the pricing problem with order quantity decision. Since the manager can choose among several reasonable order quantities and decide the optimal prices respectively \citep{price-only-Deyong2019}.

We are also interested in the price-adjustment cost in practice, which is defined by the absolute deviation to the original price $\tilde{p}$. That is, 

\begin{equation}
	\label{eq::price-adjustment}
	\min_p f_a(p;z,q) = \mathbb{E}_{D\sim f_D(p,z)}[l_a(p,q,D)]
\end{equation}

where 

\begin{equation}\label{eq::def-pi-adj}
	l_a(p,q,D) = l(p,q,D) + \gamma (p-\tilde{p})^2
\end{equation}

We denote $\tilde{p}$ as the reference price. In our work, we adopt the quadratic form $\Delta(p;\tilde{p}) = \gamma(p-\tilde{p})^2$ as \cite{adjustment-cost4-Rotemberg1982,adjustment-cost3-Roberts1992}. But the choice of the price-adjustment function does not matter as long as the adjustment cost is convex on $p$, because our solution approach is only regard to the convexity of objective function. We also note that although both price-only model and price-only model with price-adjustment cost are single-variable problem, our algorithm and analysis can suit for multi-dimension decisions. Similar to Section \ref{sec::convergence-nonconvex}, we still use the cost function $l(x,D)$ to perform the analysis. 

To connect the sequence with optimal solution, we need a intermediate \textit{stable point} that satisfies

\begin{equation}
	\label{eq::stable-point}
	x_{PS} = \arg\min_{x} \mathbb{E}_{D\sim f_D(x_{PS},D)}[l(x,D)]
\end{equation}

In \cite{DAU4-Perdomo2020}, the stable point serves as a fix point of the updating rule $x^{r+1} := \arg\min_{x}\mathbb{E}_{D\sim f_D(x^r,D)}[l(x,D)]$. They prove that the solution sequence will converge to $x_{PS}$ if the descent direction is the true expected gradient $\Enabla{D}{x}{l}$ and the cost function $l(\cdot,D)$ is strongly convex. In our work, however, the expected gradient is unknown. We prove that our approximate gradient can also converge to the stable point under some conditions. 

We derive performance bounds on convex and strongly convex cases under the following assumptions.

\begin{assumption}[Lipschitz gradient]\label{assum::conv-lip-grad}
$l(x,D)$ has $L^c_1$-Lipschitz gradient both in $x$ and $D$. In words, there exists $L^c_1$ such that
\begin{align*}
	\Vert \nabla_x l(x_1, D)-\nabla_x l(x_2, D)\Vert\leq L^c_1\Vert x_1-x_2\Vert, \quad\forall x_1,x_2,D \\
	\Vert \nabla_x l(x, D_1)-\nabla_x l(x, D_2)\Vert\leq L^c_1\Vert D_1- D_2\Vert,\quad\forall x,D_1,D_2
\end{align*}
\end{assumption}

\begin{assumption}[Lipschitz continuous and bounded gradient]\label{assum::conv-lip-cont}
$l(x,D)$ is $L^c_3$ Lipschitz continuous in $D$ and the gradient of $L$ is bounded. In words, there exists $L^c_2,L^c_3$ such that 

\begin{align*}
	\Vert  l(x, D_1)- l(x, D_2)\Vert\leq L^c_2\Vert D_1-D_2\Vert, \quad\forall x_1,x_2,D \\
	\Vert \nabla_x l(x,D)\Vert \leq L^c_3 ,\quad\forall x,D
\end{align*}
\end{assumption}

Assumptions \ref{assum::conv-lip-grad} and \ref{assum::conv-lip-cont} can hold for both the price-only models with and without price-adjustment cost. Although there is quadratic term in the cost function with price-adjustment cost, which means Assumption \ref{assum::conv-lip-cont} may not hold when $p$ is unconstrained. In practice, the price decision is usually within a reasonable range and thus \ref{assum::conv-lip-cont} can be satisfied in most cases. 

Based on the assumptions above mentioned, we then explore the theoretical guarantee of the convex and strongly convex cases, respectively.

\subsubsection*{Convex case: the price-only model}

Theorem \ref{theo::convergence-convex} states the error bound in the convex case.

\begin{theorem}[Objective Error in Convex Case]
	\label{theo::convergence-convex}
	Suppose that Assumptions \ref{assum::switch-int}, \ref{assum::eps-sensitive} and \ref{assum::conv-lip-cont} are satisfied. Denote $\lim_{N\rightarrow \infty}x^r_N=x^r$, and the optimal solution is $x^*$. If $l(x,D)$ is convex, after $k$ iterations, 

	\begin{align*}
		\label{eq::converge-convex}
		\min_{0\leq r\leq k}\{ \E{D}{x^r}{l} - \E{D}{x^*}{l} \} \leq& \frac{\epsilon L^c_2 \sum_{r=0}^k \eta^r\Vert x^*-x^r\Vert}{\sum_{r=0}^k\eta^r }\\
	&+ \frac{\Vert x^0-x^*\Vert^2 + (L^c_3)^2\sum_{r=0}^k (\eta^r)^2}{2\sum_{r=0}^k\eta^r}
	\end{align*}
\end{theorem}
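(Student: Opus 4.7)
The plan is to mimic the classical subgradient-method analysis for convex optimization, but with one extra term that accounts for the decision-dependent distribution shift, handled via the $\epsilon$-sensitivity assumption and Kantorovich--Rubinstein duality. Throughout, I would work with the limit sequence $x^r = \lim_{N\to\infty} x^r_N$, for which Proposition \ref{prop::grad-converge} guarantees that the AGD update reduces to $x^{r+1} = x^r - \eta^r\, G^\infty(x^r)$ with $G^\infty(x^r) := \mathbb{E}_{v\sim f_v(x^r,v)}[\nabla_x l(x^r,v)]$ (the projection step is non-expansive, so it does not affect the distance-to-optimum recursion).

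The core per-iteration inequality would be established in two pieces. First, applying convexity of $l(\cdot,v)$ pointwise in $v$ and then taking expectation against the same distribution $f_v(x^r,v)$,
\begin{equation*}
\mathbb{E}_{v\sim f_v(x^r,v)}[l(x^r,v)] - \mathbb{E}_{v\sim f_v(x^r,v)}[l(x^*,v)] \le \langle G^\infty(x^r), x^r - x^*\rangle.
\end{equation*}
Second, since $l(x^*,\cdot)$ is $L^c_2$-Lipschitz in $v$ by Assumption \ref{assum::conv-lip-cont}, the Kantorovich--Rubinstein dual representation of the earth mover's distance combined with the $\epsilon$-sensitivity (Assumption \ref{assum::eps-sensitive}) gives
\begin{equation*}
\mathbb{E}_{v\sim f_v(x^r,v)}[l(x^*,v)] - \mathbb{E}_{v\sim f_v(x^*,v)}[l(x^*,v)] \le L^c_2\, W_1(f_v(x^r,\cdot),f_v(x^*,\cdot)) \le \epsilon L^c_2 \Vert x^r - x^*\Vert.
\end{equation*}
Adding these two bounds yields the key inequality
\begin{equation*}
\mathbb{E}_{v\sim f_v(x^r,v)}[l(x^r,v)] - \mathbb{E}_{v\sim f_v(x^*,v)}[l(x^*,v)] \le \langle G^\infty(x^r), x^r - x^*\rangle + \epsilon L^c_2 \Vert x^r - x^*\Vert.
\end{equation*}

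Next I would run the standard distance-to-optimum recursion. Expanding $\Vert x^{r+1} - x^*\Vert^2 = \Vert x^r - \eta^r G^\infty(x^r) - x^*\Vert^2$ and invoking $\Vert G^\infty(x^r)\Vert \le L^c_3$ from Assumption \ref{assum::conv-lip-cont} gives
\begin{equation*}
2\eta^r \langle G^\infty(x^r), x^r - x^*\rangle \le \Vert x^r - x^*\Vert^2 - \Vert x^{r+1} - x^*\Vert^2 + (\eta^r)^2 (L^c_3)^2.
\end{equation*}
Plugging in the key inequality, summing telescopically over $r = 0,\ldots,k$, and using $\Vert x^{k+1} - x^*\Vert^2 \ge 0$,
\begin{equation*}
2\sum_{r=0}^k \eta^r\bigl(\E{v}{x^r}{l} - \E{v}{x^*}{l}\bigr) \le \Vert x^0 - x^*\Vert^2 + (L^c_3)^2\sum_{r=0}^k (\eta^r)^2 + 2\epsilon L^c_2 \sum_{r=0}^k \eta^r \Vert x^r - x^*\Vert.
\end{equation*}
Dividing by $2\sum_{r=0}^k \eta^r$ and lower-bounding the weighted average on the left by $\min_{0\le r\le k}\{\E{v}{x^r}{l} - \E{v}{x^*}{l}\}$ delivers the stated bound.

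The main obstacle is neither the telescoping nor the convexity step, both of which are routine; it is the bridging term $\mathbb{E}_{v\sim f_v(x^r,v)}[l(x^*,v)] - \mathbb{E}_{v\sim f_v(x^*,v)}[l(x^*,v)]$ that arises precisely because of the decision-dependent distribution. The resolution through Kantorovich--Rubinstein requires the Lipschitz-in-$v$ hypothesis on $l$, and this is exactly why Assumption \ref{assum::conv-lip-cont} is imposed in its particular form. A secondary technical point to be careful about is the passage from $x^r_N$ to the limit $x^r$: one needs the recursion to be preserved under $N\to\infty$, which follows from the uniform convergence in Proposition \ref{prop::grad-converge} together with continuity of the projection, so the argument can be carried out directly on the limiting sequence without loss of generality.
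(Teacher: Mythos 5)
Your proposal is correct and follows essentially the same route as the paper's proof: the standard subgradient-method distance recursion with $\Vert G\Vert\le L^c_3$, convexity of $l(\cdot,v)$ under the fixed distribution $f_v(x^r,\cdot)$, the decision-dependent bridging term $\mathbb{E}_{v\sim f_v(x^r)}[l(x^*,v)]-\mathbb{E}_{v\sim f_v(x^*)}[l(x^*,v)]$ controlled by Kantorovich--Rubinstein plus $\epsilon$-sensitivity (the paper's Lemma \ref{lem::eps-distribution-distance}), and finally telescoping and taking the minimum. Your explicit remarks on the non-expansiveness of the projection and on passing from $x^r_N$ to the limit $x^r$ are slightly more careful than the paper's treatment, but the argument is the same.
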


Theorem \ref{theo::convergence-convex} is a statement about the minimum distance between generated sequence $\E{D}{x^r}{l}$ and the optimal solution, $\E{D}{x^*}{l}$ when the sample size $N$ is sufficiently large. The error bound can be divided into two parts. The first term on the right-hand side is the the bound on the decision-dependent error, which is caused by the decision-dependent characteristic (see proof of Theorem \ref{theo::convergence-convex}). We observe that the decision-dependent error depends on the distribution distance $\epsilon$ and the Lipschitz constant $L^c_2$ that indicates the change rate of cost function to the change of random parameter. In other words, if the distance between the distribution under two different solutions is small, and the cost does not sensitive to the change of random parameter, then the decision-dependent error should not be an issue. However, when the decision-dependent distribution react sensitively to the change of price decision, or the cost change rapidly to the random demand, the decision-dependent error will be large, and Theorem \ref{theo::convergence-convex} indicates that AGD algorithm may not be a good choice in such a scenario. We also note that the decision-dependent will decrease as the solution $x^r$ get close to the optimal solution $x^*$. Although we cannot guarantee that $x^r$ converge to $x^*$ in the general convex case, this term can be bounded by $D_X$, which denotes the maximum distance in the decision space (for example, the gap between maximal price and the cost), and the decision-dependent error turns into $\epsilon L^c_2D_X$ in such case.

The second term on the right-hand side is due to the internal error of the gradient descent algorithm. And the only way to decrease the internal error is iterating for more times. Since the step size $\eta^r$ is often decreasing and less than $1$, the internal error term will converge to zero as $k$ increases. 

Theorem \ref{theo::convergence-convex} can suit for both the price-only model and price-adjustment cost model, since in two cases the cost function $l(x, D)$ is concave in the decision variable $p$. This conclusion also applies to other decision dependent problems that takes the form of an expectation of a convex function. We apply the convergence result to the price-only model. Corollary \ref{coro::price-only-converge} provides an upper bound on the gap between the solution sequence and the optimal value in the price-only model after $k$ iterations. 

\begin{corollary}[Convergence for price-only model]\label{coro::price-only-converge}
	For the price-only model, we denote $\lim_{N\rightarrow \infty} p^r_N=p^r$, and $p^*$ is the optimal pricing decision. If we adopt AGD algorithm with step size $\eta^k$ for $k$ iterations,

	\begin{align*}
	\min_{0\leq r\leq k}\{f_p(p^r;z,q) - f_p(p^*;z,q) \} \leq& \frac{\epsilon \sum_{r=0}^k \eta^r|p^*-p^r|}{\sum_{r=0}^k\eta^r }\\
	&+ \frac{(p^0-p^*)^2 + (D_{m}\wedge q)^2\sum_{r=0}^k (\eta^r)^2}{2\sum_{r=0}^k\eta^r}
	\end{align*}

	where $D_m$ is the maximum possible demand.
\end{corollary}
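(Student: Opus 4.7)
The plan is to specialize Theorem~\ref{theo::convergence-convex} directly to the price-only setting. First, I would identify the correspondence between the general framework and the price-only model by setting $x := p$ (scalar), $v := D$, and $l(x,v) := -\pi(p,q,D)$ with $q$ treated as a fixed parameter, so that minimizing $\mathbb{E}[l]$ is equivalent to maximizing $f_p(p;z,q)$. The feasible region is the admissible price interval, bounded below by $c$ and above by some $p_{\max}$. Convexity of the minimization objective (equivalently concavity of $f_p$) is automatic since $\pi(p,q,D)$ is linear in $p$ for each fixed $(q,D)$, hence the integrated profit is convex in $p$ for every fixed underlying distribution after sign flip.

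Next, I would verify the three hypotheses of Theorem~\ref{theo::convergence-convex} and compute the constants that appear in the bound. Assumption~\ref{assum::switch-int} follows from $|\nabla_p\pi|=|D\wedge q|\leq q$ uniformly, which gives the required $L^1$ dominating function. Assumption~\ref{assum::eps-sensitive} is inherited from the problem's modeling hypothesis on $f_D(p,z)$. For Assumption~\ref{assum::conv-lip-cont}, the explicit form $\nabla_p\pi(p,q,D)=D\wedge q$ combined with $D\in[0,D_m]$ yields the uniform gradient bound $L^c_3 = D_m\wedge q$; and since $\pi$ is piecewise linear in $D$ with slopes in $\{p-s,0\}$, the Lipschitz constant $L^c_2$ in $v$ is finite on the bounded price range.

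Finally, I would plug these constants into the inequality of Theorem~\ref{theo::convergence-convex}, flip signs back to profit maximization, and match notation. The $(L^c_3)^2$ factor becomes $(D_m\wedge q)^2$ exactly as in the statement, the norm $\|x^*-x^r\|$ collapses to $|p^*-p^r|$ under the scalar decision, and the product $\epsilon L^c_2$ is absorbed into the single sensitivity factor $\epsilon$ under the price-only normalization. The main technical point, though essentially routine, is handling the non-smoothness of $\pi$ at $D=q$: because Assumption~\ref{assum::conv-lip-cont} only requires Lipschitz continuity of $l$ in $v$ (not a proper gradient there), and because the approximate gradient of Definition~\ref{def::appr-grad} is free to pick any element of the subdifferential, the piecewise-linear behavior of $\pi$ in both $p$ and $D$ causes no obstruction. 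Substitution then yields precisely the stated bound, completing the corollary.
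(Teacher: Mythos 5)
Your proposal is correct and takes essentially the same route as the paper: the paper's proof likewise specializes Theorem \ref{theo::convergence-convex} by reading off $L^c_3 \leq D_m\wedge q$ from $\nabla_p\pi(p,q,D)=D\wedge q$ and setting $L^c_2=1$ via the rewrite $\pi(p,q,D)=(p-c)q-(p-s)(q-D)^+$, then substituting. Note that your ``absorbed into $\epsilon$ under the price-only normalization'' step is exactly as brisk as the paper's claim $L^c_2=1$ (the true Lipschitz constant of $\pi$ in $D$ is $p-s$, so both arguments implicitly assume this factor is normalized to one), but this is a shared gloss rather than a divergence in approach.
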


However, it is still not clear how the solution sequence can converge to the optimal point. Therefore, we investigate the distance to the optimal solution under strongly-convex condition. 

\subsubsection*{Strongly-convex case: the price-only problem with price-adjustment cost}

Similar to \cite{DAU4-Perdomo2020}, we do not directly give the distance to the optimal point, but provide a distance bound to a stable point first. We state the distance bound between the solution sequence of AGD algorithm and the stable point $x_{PS}$ in Theorem \ref{theo::strconv-dist-stable}.

\begin{theorem}[Distance to stable points]\label{theo::strconv-dist-stable}
	
	Suppose that Assumptions \ref{assum::switch-int}, \ref{assum::eps-sensitive} and \ref{assum::conv-lip-grad} are satisfied, $l(x,D)$ is $\gamma$-strongly convex in $x$ and at least one stable point $x_{PS}$ exists. We denote $A = \gamma - \epsilon L_1^c$ and $B = L^c_1\sqrt{1+\epsilon^2}$. If $A\ge 2B\ge 0$ and we take the constant step size $\eta$ that satisfies

	\begin{equation*}
		4B^2\eta^2-2A\eta+1 = 0
	\end{equation*}

	Then for any $\xi>0$, there exists a sample size $N_0$, for all $N>N_0$, we have the following conclusion after $k+1$ iterations,

	\textit{case 1.} if $1 - 2\eta A + 2\eta^2B^2 > 0$, then

	\begin{equation}
		\Vert x^{k+1}_N-x_{PS}\Vert\leq C^{k+1}\Vert x^1_N-x^*\Vert + \xi \eta \frac{1-C^{k+1}}{1-C}
	\end{equation}

	where $C = \sqrt{1 - 2\eta A + 2\eta^2B^2} < 1$.

	\textit{case 2.} if $1 - 2\eta A + 2\eta^2B^2 \leq 0$, then

	for any $K>0$, there exists $k>K$, such that 

	\begin{equation}
		\Vert x^{k+1}_N-x_{PS}\Vert \leq (1+\sqrt 2)\xi\eta
	\end{equation}

\end{theorem}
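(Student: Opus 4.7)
The plan is to derive a one-step recursion of the form
$\Vert x^{k+1}_N - x_{PS}\Vert \leq C\,\Vert x^k_N - x_{PS}\Vert + \eta\,\xi$
with $C^2 = 1 - 2\eta A + 2\eta^2 B^2$, where $\xi$ uniformly bounds the gradient-approximation error $\Vert G^N(\cdot;z) - \Enabla{v}{\cdot}{l}\Vert$ once $N\ge N_0$ by Proposition~\ref{prop::grad-converge}, and then iterate. The step-size rule $4B^2\eta^2 - 2A\eta + 1 = 0$ is engineered so that the quadratic governing the contraction factor takes the stated form, and the two cases in the theorem correspond to the sign of $C^2$.

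To obtain the one-step bound, start from $x^{k+1}_N = x^k_N - \eta G^N(x^k_N;z)$ and split $G^N = \Enabla{v}{x^k_N}{l} + \delta^k_N$ with $\Vert\delta^k_N\Vert \leq \xi$. Using the stable-point stationarity $\mathbb{E}_{v\sim f_v(x_{PS},v)}[\nabla_x l(x_{PS},v)] = 0$, rewrite $\Enabla{v}{x^k_N}{l}$ as the sum of (i) $\mathbb{E}_{v\sim f_v(x^k_N,v)}[\nabla_x l(x^k_N,v) - \nabla_x l(x_{PS},v)]$, whose inner product with $x^k_N - x_{PS}$ is at least $\gamma\Vert x^k_N - x_{PS}\Vert^2$ by strong convexity and whose norm is at most $L^c_1\Vert x^k_N - x_{PS}\Vert$ by the Lipschitz-in-$x$ part of Assumption~\ref{assum::conv-lip-grad}; and (ii) $\mathbb{E}_{v\sim f_v(x^k_N,v)}[\nabla_x l(x_{PS},v)] - \mathbb{E}_{v\sim f_v(x_{PS},v)}[\nabla_x l(x_{PS},v)]$, bounded by $L^c_1\epsilon\Vert x^k_N - x_{PS}\Vert$ via Kantorovich--Rubinstein duality, the Lipschitz-in-$v$ hypothesis, and $\epsilon$-sensitivity (Assumption~\ref{assum::eps-sensitive}). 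Substituting these into the expansion of $\Vert x^{k+1}_N - x_{PS}\Vert^2$, and combining (i) and (ii) in the squared-norm term via $\Vert a+b\Vert^2 \leq 2(\Vert a\Vert^2 + \Vert b\Vert^2)$, yields a quadratic bound that rearranges to the claimed linear recursion with $A = \gamma - \epsilon L^c_1$ and $B = L^c_1\sqrt{1+\epsilon^2}$.

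The two cases are then treated separately. In Case~1 the factor $C<1$ gives a strict contraction; unrolling the recursion and summing the geometric series produces exactly $C^{k+1}\Vert x^1_N - x^*\Vert + \xi\eta(1-C^{k+1})/(1-C)$. In Case~2 the linear recursion is not contractive, so I would argue directly from the quadratic bound by contradiction: if $\Vert x^k_N - x_{PS}\Vert > (1+\sqrt{2})\xi\eta$ for all sufficiently large $k$, then the right-hand side of the quadratic is strictly smaller than $\Vert x^k_N - x_{PS}\Vert^2$ by a fixed amount, which forces an unbounded decrease in the nonnegative sequence $\Vert x^k_N - x_{PS}\Vert^2$ and is impossible. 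Hence the claimed bound must be attained for infinitely many $k$, matching the theorem's ``there exists $k>K$'' phrasing.

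The hard part will be Step~1: tracking all constants so that the strong-convexity lower bound, the Lipschitz-gradient upper bound, and the $\epsilon$-sensitivity estimate for term~(ii) combine into the clean form $C^2 = 1 - 2\eta A + 2\eta^2 B^2$ with $B = L^c_1\sqrt{1+\epsilon^2}$ (in particular, the $\sqrt{1+\epsilon^2}$ rather than $1+\epsilon$ relies on applying $\Vert a+b\Vert^2 \leq 2(\Vert a\Vert^2 + \Vert b\Vert^2)$ in the squared-norm term rather than the triangle inequality). A secondary subtlety is justifying that the approximation error $\xi$ is uniform across the entire iterate sequence, which follows from Proposition~\ref{prop::grad-converge} once the iterates are confined to the compact feasible region $X$. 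Case~2 itself is genuinely non-standard because the usual geometric-series argument fails, so the liminf-style argument above must be stated carefully to yield the precise $(1+\sqrt{2})$ constant appearing in the theorem.
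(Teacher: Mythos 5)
Your proposal takes essentially the same route as the paper's proof: expand $\Vert x^{k+1}_N - x_{PS}\Vert^2$, invoke Proposition \ref{prop::grad-converge} to replace $G^N$ by the expected gradient up to a uniform error $\xi$, bound the inner-product term below by $A\Vert x^k_N-x_{PS}\Vert^2$ and the gradient-norm term above by $2B^2\Vert x^k_N-x_{PS}\Vert^2$, then obtain the quadratic recursion, telescope the contraction with the stated step-size rule in case~1, and use the descent-whenever-above-threshold argument in case~2. The only difference is cosmetic: you re-derive the two strong-convexity/$\epsilon$-sensitivity bounds via the Kantorovich--Rubinstein decomposition at the stable point, whereas the paper imports exactly those bounds from Proposition 2.3 of \cite{DAU5-Perdomo2020}.
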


Theorem \ref{theo::strconv-dist-stable} shows the distance two the stationary point under the strongly-convex condition. In case 1, the bound can also be divided into two components: the first term is due to the initial distance between $x^1_k$ and $x_{PS}$ and the parameter $C$. Parameter $C$ is relevant to the strongly-convex parameter $\gamma$, the Lipschitz continuous parameter $L^c_1$ and the distribution distance $\epsilon$. When the degree of strong convexity is large, and the cost function and decision-dependent distribution do not react sensitively to the decision variables, then $C$ is small and the first term decreases rapidly. This result is reasonable since strong convexity can increase the converge speed and the decision-dependent effect diminishes as $L^c_1$ and $\epsilon$ decrease. The second term can be explained by the approximate error. As shown in Proposition \ref{prop::grad-converge}, $\xi$ can be sufficiently small when the sample size is large. Therefore, we can reduce the second term by collecting more data.

In case 2, we prove that the distance will be become decreasing immediately when exceeding the bound $(1+\sqrt 2)\xi\eta$ until it reach the bound again. Therefore, we can prove that the distance will either decreasing or fluctuate around $(1+\sqrt 2)\xi\eta$.

Now we focus on the distance to the optimal solution. We have investigated the distance bound between solution sequence and stable point $x_{PS}$ in Theorem \ref{theo::strconv-dist-stable}, so we only need to show the relationship between stable point and the optimal solution. 

\begin{lemma}[Theorem 4.3 in \citep{DAU4-Perdomo2020}]\label{lem::stable-opt-distance}
	Suppose that $l(x,D)$ is $L_D$-Lipschitz in $D$ and strongly convex, and the Assumption \ref{assum::eps-sensitive} is satisfied, then for every stable point $x_{PS}$, we have 

	\begin{equation*}
		\Vert x^* - x_{PS}\Vert \leq \frac{2L_D\epsilon}{\gamma}
	\end{equation*}
\end{lemma}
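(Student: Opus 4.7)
The plan is to combine the strong convexity of $l(\cdot, v)$ with the $\epsilon$-sensitivity of the distribution map in a two-step squeeze: first use strong convexity and the defining minimality of the stable point to obtain a quadratic lower bound on $\|x^*-x_{PS}\|$, then use the Lipschitz continuity of $l$ in $v$ together with Assumption \ref{assum::eps-sensitive} to obtain a matching linear upper bound. Dividing through by $\|x^*-x_{PS}\|$ will then yield the desired inequality.

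First I would observe that because $l(\cdot,v)$ is $\gamma$-strongly convex for every fixed $v$, the averaged objective $x\mapsto\mathbb{E}_{v\sim f_v(x_{PS},v)}[l(x,v)]$ is also $\gamma$-strongly convex, and $x_{PS}$ is by definition its minimizer (equation \eqref{eq::stable-point}). The standard quadratic growth inequality therefore gives
\begin{equation*}
\mathbb{E}_{v\sim f_v(x_{PS},v)}[l(x^*,v)] - \mathbb{E}_{v\sim f_v(x_{PS},v)}[l(x_{PS},v)] \;\geq\; \tfrac{\gamma}{2}\|x^*-x_{PS}\|^2.
\end{equation*}
On the other hand, $x^*$ is by assumption the global optimum of the performative problem $\min_x\mathbb{E}_{v\sim f_v(x,v)}[l(x,v)]$, so $\mathbb{E}_{v\sim f_v(x^*,v)}[l(x^*,v)]\leq\mathbb{E}_{v\sim f_v(x_{PS},v)}[l(x_{PS},v)]$. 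Subtracting this from the preceding inequality cancels the $l(x_{PS},v)$ term and leaves the clean bound
\begin{equation*}
\mathbb{E}_{v\sim f_v(x_{PS},v)}[l(x^*,v)] - \mathbb{E}_{v\sim f_v(x^*,v)}[l(x^*,v)] \;\geq\; \tfrac{\gamma}{2}\|x^*-x_{PS}\|^2.
\end{equation*}

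Next I would bound the left-hand side from above by noticing that both terms involve the same decision $x^*$ evaluated under two different distributions of $v$. Since $l(x^*,\cdot)$ is $L_v$-Lipschitz in $v$, Kantorovich--Rubinstein duality for the Wasserstein-$1$ distance gives
\begin{equation*}
\mathbb{E}_{v\sim f_v(x_{PS},v)}[l(x^*,v)] - \mathbb{E}_{v\sim f_v(x^*,v)}[l(x^*,v)] \;\leq\; L_v\, W_1\!\bigl(f_v(x_{PS},v),\, f_v(x^*,v)\bigr),
\end{equation*}
and applying Assumption \ref{assum::eps-sensitive} bounds this further by $L_v\epsilon\|x^*-x_{PS}\|$. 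Chaining the two inequalities yields $\tfrac{\gamma}{2}\|x^*-x_{PS}\|^2 \leq L_v\epsilon\|x^*-x_{PS}\|$, and dividing by $\|x^*-x_{PS}\|$ (the case $x^*=x_{PS}$ being trivial) delivers the claim.

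The main obstacle, if any, lies in the recognition that the left-hand side of the second inequality must involve the \emph{same} decision variable under two distinct distributions in order for the Lipschitz-in-$v$ property to apply cleanly; that is precisely what the subtraction of the optimality inequality for $x^*$ engineers. The rest is a routine strong-convexity-meets-Wasserstein argument, and everything reduces to standard facts once those two endpoints are in place.
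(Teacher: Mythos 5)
Your proof is correct. The paper itself does not prove this lemma --- it is imported wholesale as Theorem 4.3 of \cite{DAU4-Perdomo2020} --- and your argument is precisely the proof given in that reference: quadratic growth of the $\gamma$-strongly convex proxy objective $x\mapsto\mathbb{E}_{v\sim f_v(x_{PS},v)}[l(x,v)]$ at its minimizer $x_{PS}$, subtraction of the performative-optimality inequality for $x^*$ to cancel the $l(x_{PS},v)$ terms, and then Kantorovich--Rubinstein duality combined with $\epsilon$-sensitivity to cap the resulting distribution-shift term by $L_v\epsilon\Vert x^*-x_{PS}\Vert$. So rather than finding a different route, you have faithfully reconstructed the cited proof, which is exactly what the paper relies on.
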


Lemma \ref{lem::stable-opt-distance} shows the distance bound between the stable point and optimal solution in the strongly-convex case. Based on this conclusion, we can now give the upper bound of optimality gap of the solution. Theorem \ref{theo::strconv-dist-stable} and Lemma \ref{lem::stable-opt-distance} can be applied to the price-only model with price-adjustment cost, so we can also give the upper bound for the price-adjustment cost model.

\begin{corollary}[Distance to optimal solution]\label{coro::dist-opt}
	Suppose that all assumptions in Theorem \ref{theo::strconv-dist-stable} are satisfied, and $l(x,D)$ is $L_D$-Lipschitz in $D$, then after $k$ iterations, we have 

	\begin{equation*}
		\Vert x^{k+1} - x^* \Vert \leq \frac{2L_D\epsilon}{\gamma} + \max\left\{C^{k+1}\Vert x^1_N-x^*\Vert + \xi \eta \frac{1-C^{k+1}}{1-C}, (1+\sqrt 2)\xi\eta\right\}
	\end{equation*}
\end{corollary}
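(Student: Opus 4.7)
\medskip

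\textbf{Proof proposal.} The plan is to obtain the bound as a direct triangle-inequality composition of the two results already in hand: Theorem \ref{theo::strconv-dist-stable}, which controls the distance from the iterate to the stable point, and Lemma \ref{lem::stable-opt-distance}, which controls the distance from the stable point to the true optimum. No new analytical machinery is required; the corollary is essentially a bookkeeping combination of previously established estimates.

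First, I would write
\begin{equation*}
\Vert x^{k+1} - x^*\Vert \leq \Vert x^{k+1} - x_{PS}\Vert + \Vert x_{PS} - x^*\Vert
\end{equation*}
by the triangle inequality, with $x_{PS}$ the stable point whose existence is postulated in Theorem \ref{theo::strconv-dist-stable}. Since the hypotheses of Lemma \ref{lem::stable-opt-distance} are all present (strong convexity of $l$ in $x$, $L_v$-Lipschitzness of $l$ in $v$, and $\epsilon$-sensitivity from Assumption \ref{assum::eps-sensitive}), I can immediately substitute
\begin{equation*}
\Vert x_{PS} - x^*\Vert \leq \frac{2L_v\epsilon}{\gamma}
\end{equation*}
for the second term, which contributes the first summand on the right-hand side of the claim.

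Next I would appeal to Theorem \ref{theo::strconv-dist-stable} to bound $\Vert x^{k+1} - x_{PS}\Vert$ (taking the limit $N \to \infty$ as done in the statement of the corollary, so that $x^{k+1}$ stands for the limiting iterate and the approximation error $\xi$ can be made arbitrarily small for all $N$ sufficiently large). The theorem furnishes two distinct bounds depending on the sign of $1 - 2\eta A + 2\eta^2B^2$: in case 1 we get the geometric-plus-residual expression $C^{k+1}\Vert x^1_N - x^*\Vert + \xi\eta(1-C^{k+1})/(1-C)$, and in case 2 we get $(1+\sqrt 2)\xi\eta$ for infinitely many $k$. Since in advance we do not know which case applies to the chosen $\eta$, it is both valid and conservative to take the maximum of the two expressions; this yields exactly the bracketed quantity in the statement. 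Summing the two bounds produces the claimed inequality.

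The only mild subtlety I foresee is the interpretation of $x^{k+1}$ as the large-$N$ limit of $x^{k+1}_N$, so that Lemma \ref{lem::stable-opt-distance} (which is stated for deterministic quantities) and Theorem \ref{theo::strconv-dist-stable} (which requires $N$ to be chosen large enough given $\xi$) can be combined without further qualification. Aside from this notational alignment, the argument is a short triangle inequality and carries no real obstacle.
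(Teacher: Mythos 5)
Your proof is correct and follows essentially the same route as the paper, whose entire argument is likewise a triangle inequality through the stable point $x_{PS}$ combining Theorem \ref{theo::strconv-dist-stable} (iterate-to-stable-point distance) with Lemma \ref{lem::stable-opt-distance} (stable-point-to-optimum distance). Your additional remarks on taking the maximum over the theorem's two cases and on the large-$N$ interpretation of $x^{k+1}$ are sensible fillings-in of details the paper leaves implicit.
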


\begin{corollary}[Solution gap of the price-adjustment cost model]\label{coro::price-adj-converge}
	Denote $p^*$ as the optimal solution of the pricing problem with price-adjustment cost. Assume that $\gamma -\epsilon L^c_1 \ge 2L^c_1\sqrt{1+\epsilon^2}$, and the step size $\eta$ is taken as Theorem \ref{theo::strconv-dist-stable}. After $k$ iterations, for any $\xi>0$, there exists a sample size $N_0$, if $N>N_0$, the solution gap can be bounded by 

	\begin{equation*}
		|p^k-p^*|\leq \frac{2\epsilon}{\gamma}+\max\left\{ C^{k+1}|p^1_N-p^*|+\xi\eta\frac{1-C^{k}}{1-C}, (1+\sqrt 2 \xi\eta) \right\}
	\end{equation*}

	where $C = \sqrt{1-2\eta(\gamma - \epsilon L^c_1)+ 2\eta^2(1+\epsilon^2)(L^c_1)^2}$, $L^c_1 = 2\gamma (p-\tilde{p})$.

\end{corollary}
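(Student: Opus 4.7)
The plan is to view this corollary as a direct specialization of Corollary \ref{coro::dist-opt} (equivalently, of Theorem \ref{theo::strconv-dist-stable} combined with Lemma \ref{lem::stable-opt-distance}) to the single-variable price-adjustment cost model. The bulk of the work is checking that each hypothesis of those general results translates to the stated constants in this concrete setting; the inequality itself then drops out by substitution and a triangle inequality.

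First I would set $l(p, D) = -\pi_a(p, q, D) = -p(D\wedge q) + cq - s(q-D)^+ + \gamma(p-\tilde{p})^2$, converting the maximization into the minimization framework used in Section \ref{sec::convergence-convex}. Assumptions \ref{assum::switch-int} and \ref{assum::eps-sensitive} are inherited from the problem. Strong convexity in $p$ comes entirely from $\gamma(p-\tilde p)^2$, since the remainder of $\pi_a$ is linear in $p$ for each fixed $D$, so $l$ is $\gamma$-strongly convex (up to an immaterial factor of $2$ that can be absorbed into $\gamma$). For Assumption \ref{assum::conv-lip-grad}, note that $\nabla_p l = 2\gamma(p-\tilde{p}) - (D\wedge q)$, whose Lipschitz constant in $p$ is $2\gamma$ and whose Lipschitz constant in $D$ on $\{D\le q\}$ is $1$; taking $L_1^c$ as a uniform upper bound on these quantities over the feasible price range verifies the hypothesis, and the stability-parameter condition $\gamma - \epsilon L_1^c \ge 2L_1^c\sqrt{1+\epsilon^2}$ is then imposed by assumption.

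With these identifications in place, I would invoke Corollary \ref{coro::dist-opt}. The $\max$-term — the larger of the geometric decay $C^{k+1}|p^1_N - p^*| + \xi\eta(1-C^k)/(1-C)$ and the oscillation bound $(1+\sqrt{2})\xi\eta$ — is precisely the two-case distance-to-stable-point bound from Theorem \ref{theo::strconv-dist-stable}, with $A = \gamma - \epsilon L_1^c$ and $B = L_1^c\sqrt{1+\epsilon^2}$ substituted into the formula for $C$. The summand $2\epsilon/\gamma$ comes from Lemma \ref{lem::stable-opt-distance} applied to $\pi_a$, using that $|\partial_D \pi_a|$ is bounded by a constant (normalized to $1$ in the statement, with the excess absorbed into the constants). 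Combining these via $|p^k - p^*| \le |p^k - p_{PS}| + |p_{PS} - p^*|$ yields the claim. The main obstacle is unambiguously identifying $L_1^c$: the paper writes $L_1^c = 2\gamma(p-\tilde{p})$, which is value-dependent rather than a genuine Lipschitz constant, and making the application of Theorem \ref{theo::strconv-dist-stable} rigorous requires replacing it by the supremum $2\gamma\sup_p|p-\tilde p|$ over the feasible price range. Once this is resolved, the rest of the proof is bookkeeping.
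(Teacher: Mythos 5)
Your proposal matches the paper's own proof in both structure and substance: the paper likewise verifies that $\pi_a$ is $\gamma$-strongly convex in $p$, identifies the constants $L_v$ (Lipschitz in $D$, taken as $1$) and $L_1^c$, and then concludes by invoking Corollary \ref{coro::dist-opt}, i.e., Theorem \ref{theo::strconv-dist-stable} combined with Lemma \ref{lem::stable-opt-distance} via the triangle inequality. Even your fix for the value-dependent constant $L_1^c = 2\gamma(p-\tilde p)$ is exactly what the paper's proof does, replacing it by the uniform bound $L_1^c \leq 2\gamma|p_m-\tilde{p}|$ over the feasible price range.
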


In summary, when the cost function is strongly-convex, we can prove the performance bound to an intermediate stable point. Since the stable point is close to the optimal solution, we can then provide the distance bound to the optimal solution of AGD algorithm under strongly-convex case. The convergence result in Theorem \ref{theo::strconv-dist-stable} is suitable for both the price-only models with and without price-adjustment cost. 

\section{Experiments}\label{sec::experiment}


In this section, we validate the convergence performance of the AGD algorithm and compare its performance with other methods. In our experiments, we use both simulated data and real data from the electricity industry to validate the effectiveness the proposed AGD algorithm. To demonstrate the importance of decision-dependent characteristic, we compare the results of our algorithm with those of models that do not consider decision-dependent effect in both real and simulated data. Finally, we compare our AGD algorithm with a prediction-then-optimize approach in simulated data, which provides evidence for the necessity of using distribution-free models in practice.

\subsection{Data Description}


We conduct numerical experiments on two datasets. The first dataset comes from a real-world power plant pricing scenario. This dataset describes the electricity demand and price situation in Victoria, Australia from 2015 to 2020. Factors affecting the electricity demand include temperature, solar exposure, holidays, rainfall, etc. The manager needs to decide on the electricity price on a daily frequency based on these features. The main challenge is that the manager does not know how demand changes with price under the current features. He/she can only estimate demand based on historical pricing and feature data. Noted that the intuitive inverse relationship between electricity consumption and price maybe not so clear in the dataset. In summer and winter when the electricity demand is high, power plants often control electricity demand by increasing prices, which results in a less significant inverse relationship between price and demand in the dataset. This also reflects the importance of decision-dependency in our model. The source of the real-world dataset can be seen in Section \ref{sec::appd-data-description}.

The second dataset comes from simulated data. We generated the demand, price and feature data from a known distribution and functional relationship. The aim is to maximize actual profits through optimal pricing and order quantity decisions. The underlying demand distribution and parameter values of this dataset can be found in Section \ref{sec::appd-data-description}.

\subsection{Convergence Performance}
\label{sec::exp-convergence-perform}


In this section, we validate the convergence performance of the AGD algorithm from different aspects. We first test the convergence of the algorithm under kNN, kernel regression, decision tree (CART), and random forest (RF). The training of CART and RF is completed by the “scikit-learn” package in Python. The hyperparameters of each ML method are determined through grid search. Specifically, we manually discretize the value space of hyperparameters and select the best hyperparameters in the prediction model according to prediction performance. The iteration stops when the step size is below $10^{-5}$ or the solution exceeds the upper or lower bound. Finally, we use the Armijo principle to select the step size (for a comparison between Armijo step size and diminishing step size, see Section \ref{sec::appd-step-compare}). All computations were carried out on Python 3.10, with an Inter i7-9750H processor which has 32.0 GB of RAM.


The performance of our algorithms can be evaluated according to two criteria: (1) convergence and (2) the change of objective value in each iteration. We first compare the convergence performance of the newsvendor pricing problem on the simulated dataset. According to Figure \ref{fig::converge}, we find that all four models are able to descent towards the local optimum. Among them, kernel regression and CART perform better, indicating that they have more accurate estimates of profit and gradient. Note that the priority of these two weight methods not always holds true, but depends on whether the weight method gives an accurate prediction on the model. As shown in Figure \ref{subfig::converge-gap}, we also find that the AGD algorithm has an relatively high converge rate. Among four weight functions, the gap of kNN, kernel regression and CART is below 5\%.

\begin{figure}[htbp]
    \caption{Comparison among different weight functions of AGD algorithm for newsvendor pricing model (simulated dataset).}
    \centering
	\subfigure[Process snapshot]{
		\includegraphics[width=0.45\linewidth]{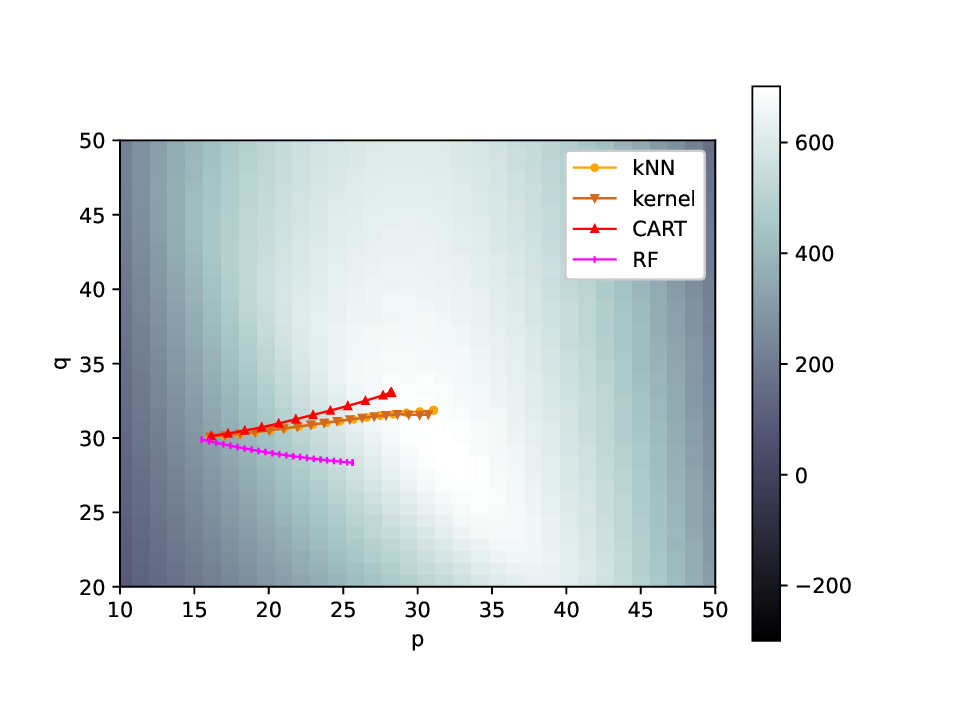}
        \label{subfig::converge3d}
    }\hfill
	\subfigure[Optimality gap]{
		\centering
		\includegraphics[width=0.45\linewidth]{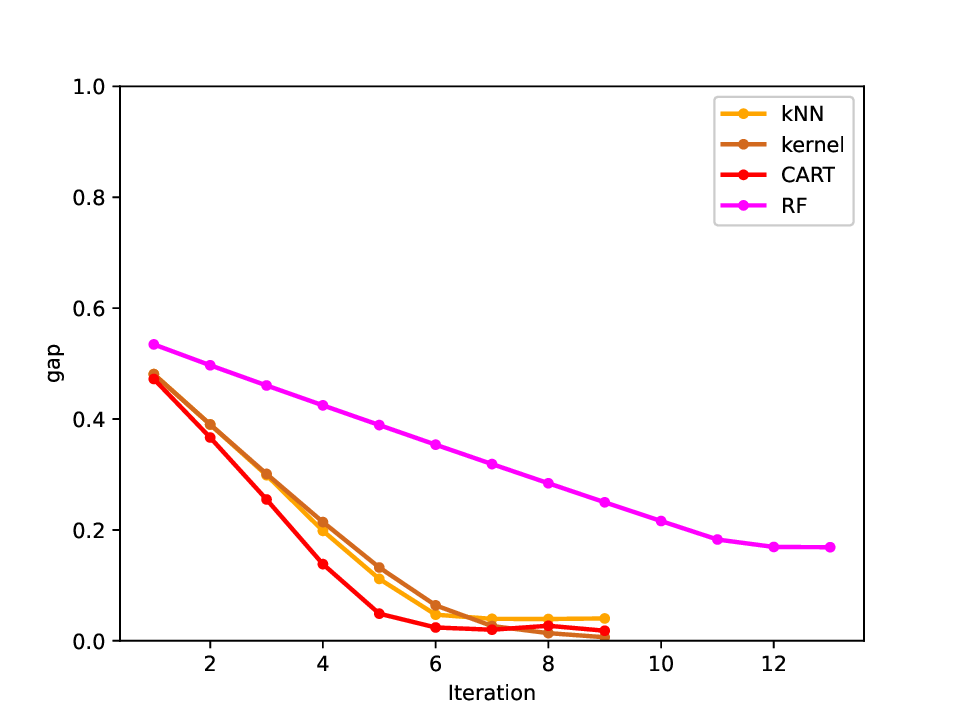}
        \label{subfig::converge-gap}
	}
    
    \label{fig::converge}
\end{figure}


Secondly, we compare the convergence of the price-only model and the model considering adjustment-cost on the real-world dataset. Figure \ref{fig::price-gap} shows the convergence results in both cases. The results show that under the price-only condition, the gap between the algorithm and the optimal value is less than 5\%. We can observe that each algorithm has a slight deviation from the optimal solution before the iteration stops. This deviation reflects the approximation error of the weight approximation method. This is also why we use the Armijo rule to select the step size: the Armijo rule can ensure that the estimated function value decreases monotonically, that is, the true function value will not deviate much from the local minimum.

\begin{figure}[htbp]
    \caption{Comparison of different weight functions of AGD algorithm for price-only model with and without price-adjustment cost (real dataset). }
    \centering
	\subfigure[Price-only model]{
		\includegraphics[width=0.45\linewidth]{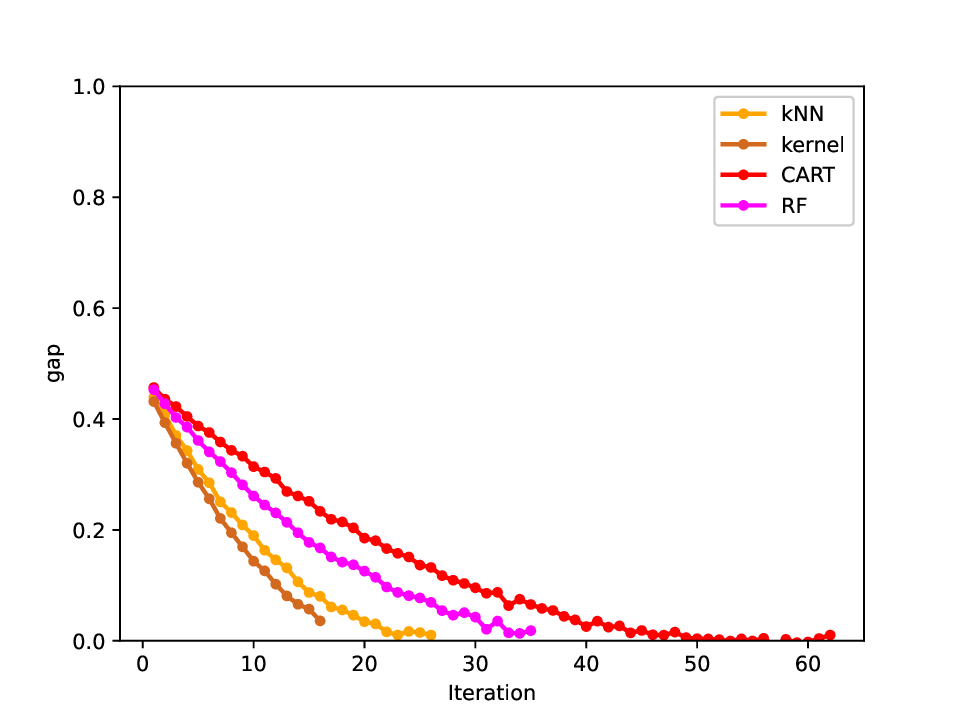}
        \label{subfig::price-only-gap}
    }\hfill
	\subfigure[Price-only model with price-adjustment cost]{
		\centering
		\includegraphics[width=0.45\linewidth]{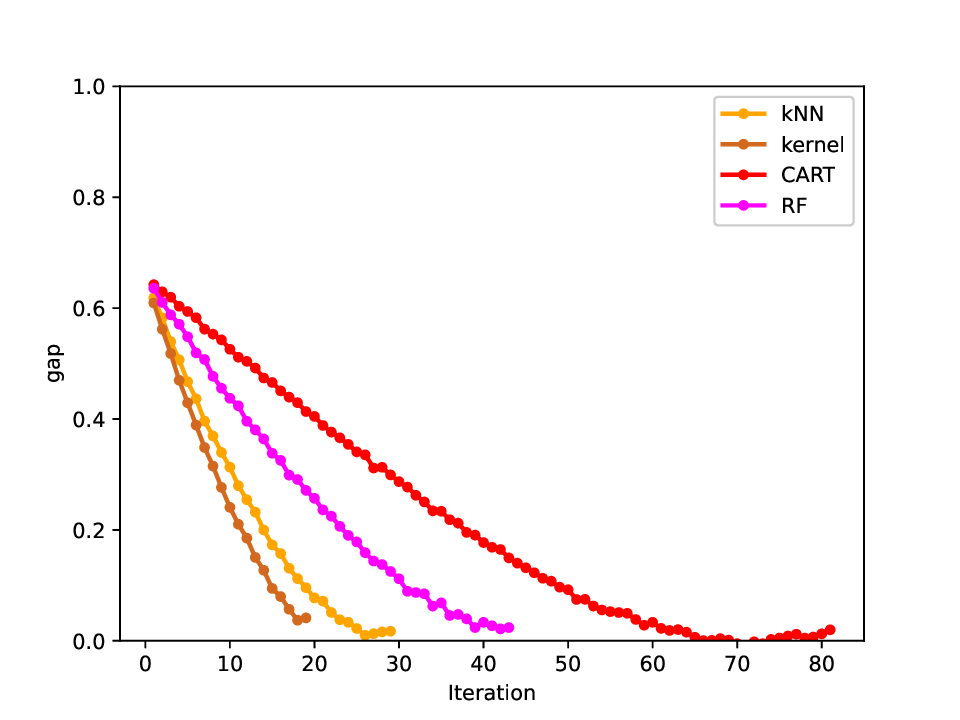}
        \label{subfig::price-only-adjust-gap}
	}
    
    \label{fig::price-gap}
\end{figure}


In order to verify the performance of the model under small sample conditions, we study the performance of the AGD algorithm under different sample sizes on the simulated dataset. We use kNN weight method with $k=20$ and generate $5$ stochastic features for each sample size. The average, maximum and minimum optimality gaps are shown in Figure \ref{fig::sample-size}. As expected, AGD algorithm can converges to stationary point of the true objective with small sample size. When the sample size is large, the optimality gap becomes more stable and decreases correspondingly, since a larger sample size provide more information of the true distribution.

\begin{figure}[htbp]
    \caption{Optimality gap under different sample size (simulated dataset)}
    \centering
    \includegraphics[width=0.6\linewidth]{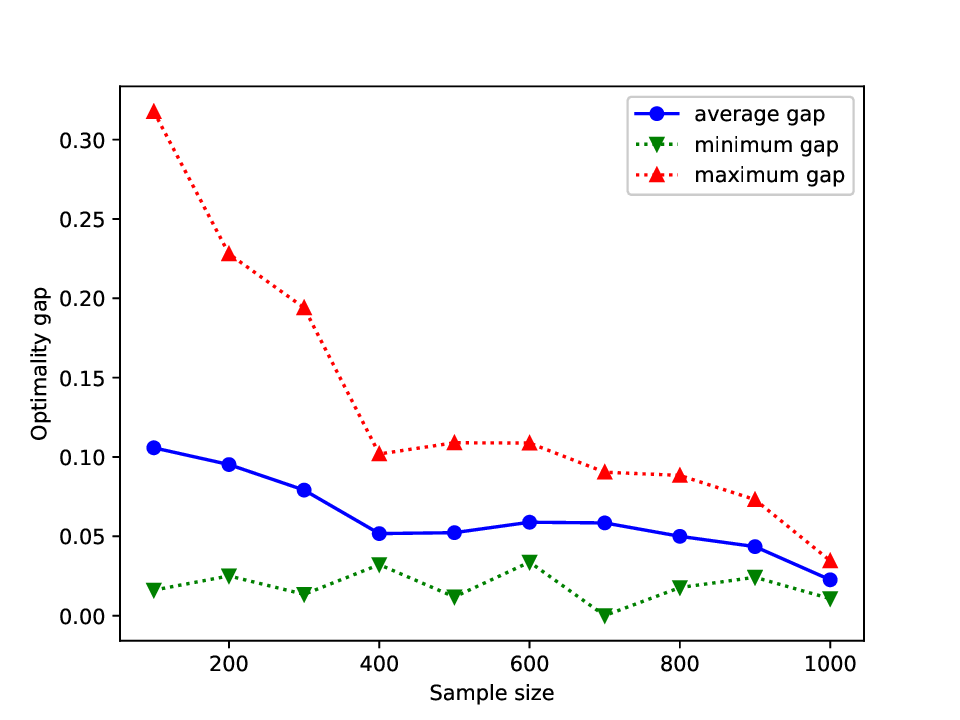}
    
    \label{fig::sample-size}
\end{figure}


Finally, we apply the AGD algorithm to the pricing problem in the electricity industry under real conditions. We divided the dataset into two parts, the training set (n = 1895, 90\%) and the test set (n = 211, 10\%). We generate an approximate model based on the training set data and sample the historical features from the test set to before using them to solve the pricing problem. We assume that the pricing data in the test set is reasonable, so we can compare the decisions made by algorithm with the actual prices to judge the rationality of the algorithm results. The results in Figure \ref{fig::true-price-dist} show that the difference between the most pricing decision given by the AGD algorithm and the actual pricing is within 5\%.

\begin{figure}[htbp]
    \caption{Comparison between historical price and optimization result (real dataset)}
    \centering
    \includegraphics[width=0.6\linewidth]{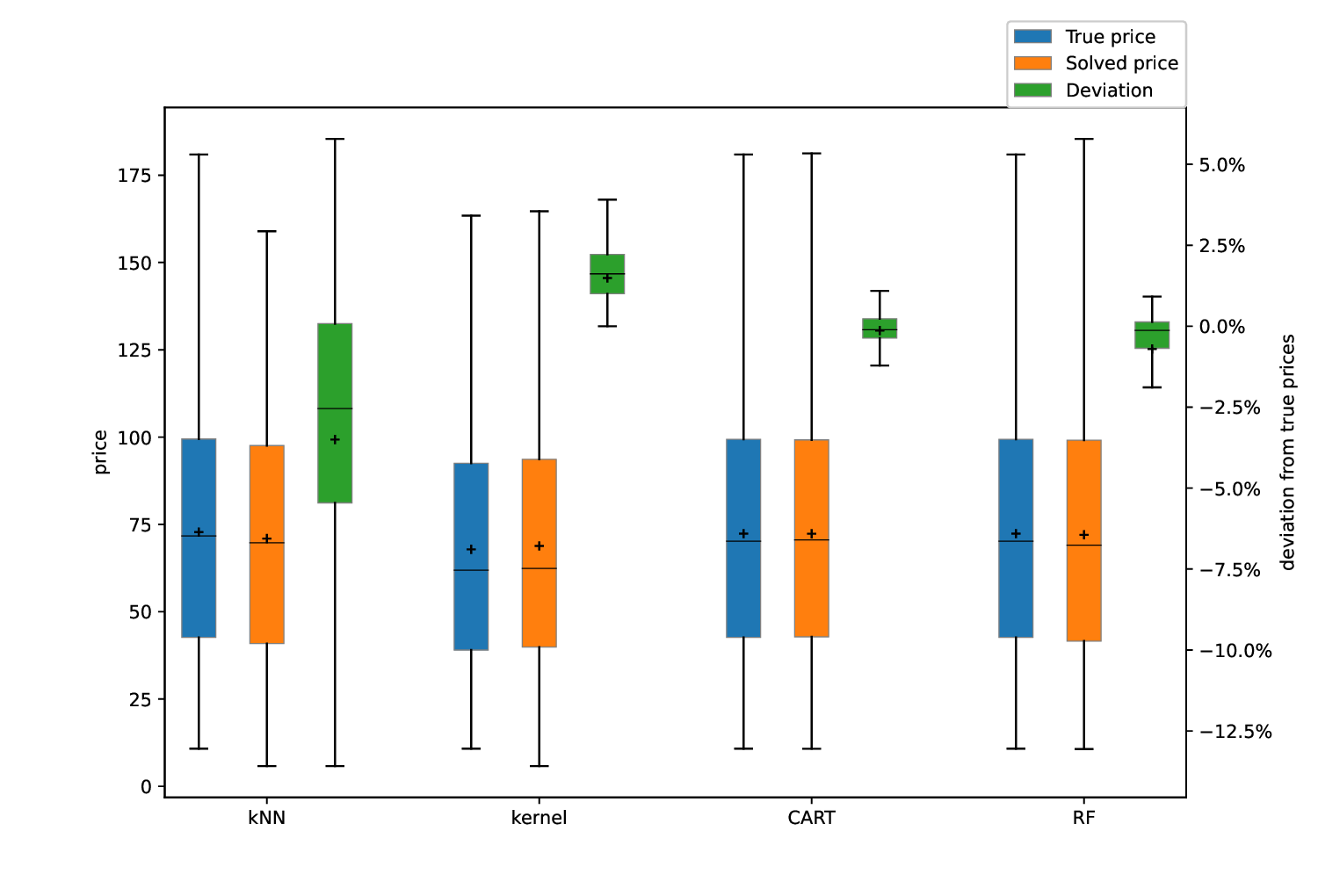}
    
    \label{fig::true-price-dist}
\end{figure}

\subsection{Value of Decision Dependent Property}


In this section, we demonstrate the necessity of considering the decision-dependent characteristic of our model. Remind that the decision-dependent models are those in which the distribution of stochastic parameters, for example the demands, depend on the decision itself, rather than being determined solely by the features. Failure to account for this property can lead to significant errors in the solution obtained. We will compare two solution approaches: the decision-dependent scenario solved by AGD and the decision-independent scenario solved by reliable optimization tools. We show that accounting for the decision-dependent nature of the problem is crucial for obtaining accurate solutions.

We first describe the two scenarios mentioned above. In the decision-dependent scenario, we construct approximate model as (\ref{eq::appr-obj}) and use AGD algorithm to solve the model. In the decision-independent scenario, however, we construct the approximate model in the same way as the decision-dependent scenario. The only difference is that we delete the price variable $p$ from the weight function $w^i(\cdot)$. We use the minimization function in "scipy" package in Python, since the removal of decision variable from the weight function significantly reduces the complexity of the approximate model. We also study the performance of pure SAA method, which ignores both decision-dependent effect and feature (see Section \ref{sec::appd-pureSAA} for detail).

We conduct the comparison on both simulated and real-world datasets. In the simulated dataset experiments, we compare the objective function obtained from the two scenarios. In the real-world dataset, since we are unable to determine the true objective function due to the unknown demand distribution, we instead assume that the historical pricing decisions in the test set are reasonable. We then compare the optimization results obtained from decision-dependent, decision-independent and pure SAA scenarios to the historical prices in the test set. We set the deviation from the historical prices as a criterion of the performance of the algorithm.

The result on the simulated dataset shown in Table \ref{tab::decision-dep}, where the gap reduction is the difference between decision-independent approach and decision-dependent approach. When the decision-dependent characteristic of the problem is taken into account, the AGD algorithm can converge to a local minimum in the vicinity of the true optimum. However, when the decision-dependent is not taken into consideration, the obtained solution is significantly biased away from the true optimum. 


\begin{table}[htbp]
    \centering
    \caption{Comparison between decision-dependent and decision-independent scenarios (real dataset). }
      \begin{tabular}{cccccc}
      \toprule
      \multirow{2}[4]{*}{Method} & \multicolumn{4}{c}{Decision-dependent scenarios} &  \\
  \cmidrule{2-5}          & kNN   & kernel & CART  & RF    &  \\
      \midrule
      Profit & 674.37 & 698.2 & 689.94 & 584.07 &  \\
      Optimality gap & 4.03\% & 0.64\% & 1.82\% & 16.88\% &  \\
      Gap reduction & 96.93\% & 99.51\% & 98.49\% & 86.41\% &  \\
      \midrule
      \multirow{2}[4]{*}{Method} & \multicolumn{4}{c}{Decision-independent scenarios} &  \\
  \cmidrule{2-5}          & kNN   & kernel & CART  & RF    & pure SAA \\
      \midrule
      Profit & -220.88 & -212.52 & -139.59 & -170.46 & -283.62 \\
      Optimality gap & 131.43\% & 130.24\% & 119.87\% & 124.26\% & 140.36\% \\
      \bottomrule
      \end{tabular}%
    \label{tab::decision-dep}%
\end{table}%

The results of the real-world pricing problem can be seen in Figure \ref{fig::decision-dep-price-dist}. When the decision-dependent characteristic of the problem is considered, the obtained pricing decision is close to the true historical price, while the solution obtained under the decision-independent assumption was significantly biased away from the historical prices, which is consistent with the simulated dataset.

\begin{figure}[htbp]
    \caption{Deviation comparison between decision dependent and independent scenarios (real dataset)}
    \centering
    \includegraphics[width=0.6\linewidth]{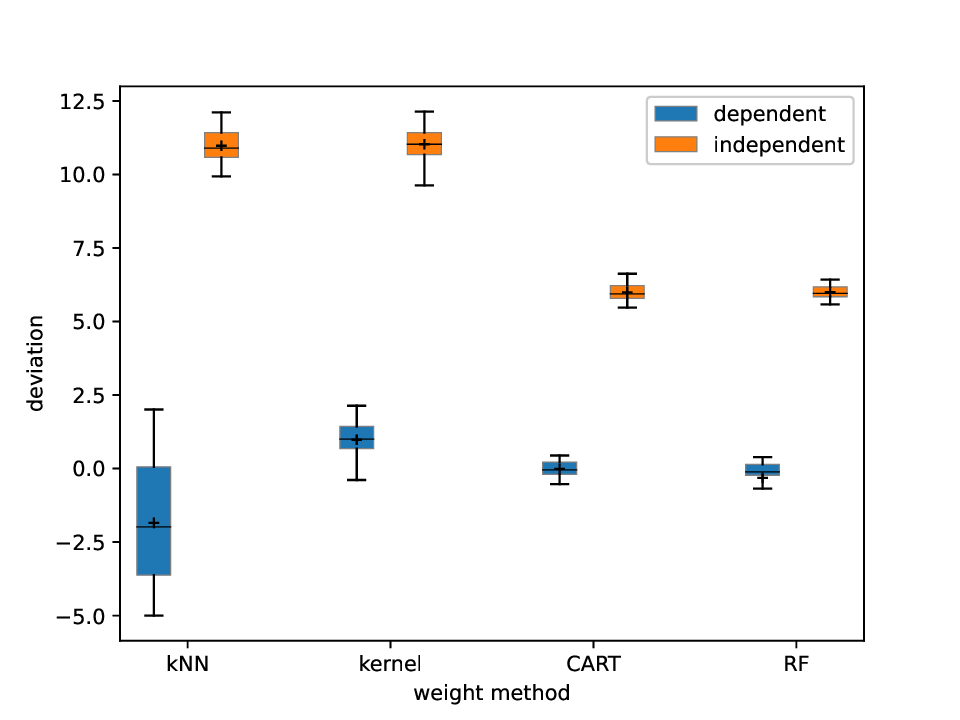}
    
    \label{fig::decision-dep-price-dist}
\end{figure}

In summary, we demonstrate the importance of accounting for the decision-dependent characteristic of our weight approximation model. In both simulated and practical dataset, failure to consider the decision-dependent characteristic can result in significant errors in the solution obtained. This proves the necessity of decision-dependent characteristic and hence illustrates the importance of AGD algorithm that can solve this kind of problem.

\subsection{Value of Distribution-Free Property}


In this section, we validate the necessity and practical significance of the distribution-free property in our model. The experiments are designed to compare the performance of the distribution-free (DF) method with the traditional PTO approach in two scenarios with different demand distributions.

We generate two sets of demand samples, one with the simple linear decision rule where the demands have a linear relationship to the features and price, and the other with the complex demand model we used to generate simulated dataset (see Section \ref{sec::appd-data-description}). For each demand setting, we compare the performance of two solution approaches: the first approach is to use the PTO paradigm. We first construct the demand model $D(p,z)$ by linear regression trained by the dataset before substituting the demand model and optimizing the best price. The second approach is to use the distribution-free approximation model described in Section \ref{sec::model}, and then solve the problem using the AGD algorithm.

The results of our experiments are shown in Figure \ref{fig::distribution-free}. We can observe that in the simple linear demand model scenario, the PTO method outperforms the distribution-free method because the true demand model satisfies the demand prediction assumptions exactly. We also notice that even in this case, the distribution-free method still performs well. In contrast, in the complex demand distribution scenario, our DF method clearly outperforms the PTO method. Moreover, we find that when the demand prediction assumption deviates from the true distribution, the PTO strategy sometimes generates lower profits and sometimes fails to converge. Therefore, when there is few information about the distribution of stochastic parameters, adopting the distribution-free method can better adapt to a broader range of real-world scenarios, and lead to better solutions than assuming a specific distribution rule for unknown parameters.

\begin{figure}[htbp]
    \caption{Comparison between distribution-free (DF) and PTO approach under two demand models (simulated dataset)}
    \centering
	\subfigure[Profits comparison between DF and PTO method under two demand models]{
		\includegraphics[width=0.45\linewidth]{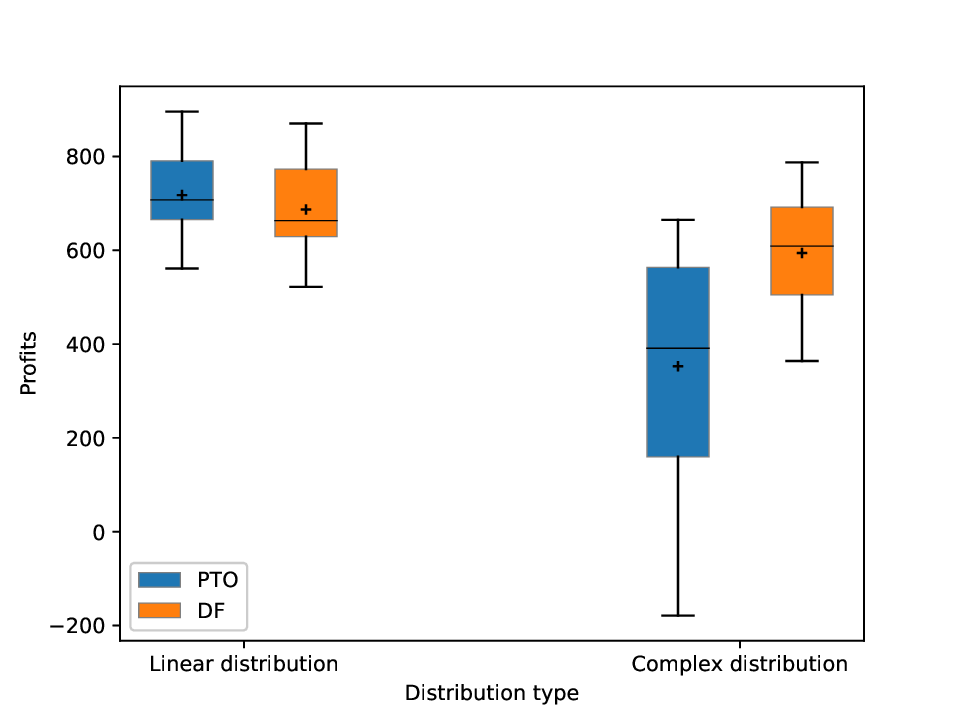}
        \label{subfig::DF-value}
    }\hfill
	\subfigure[Profit differences of DF and PTO method under two demand models]{
		\centering
		\includegraphics[width=0.45\linewidth]{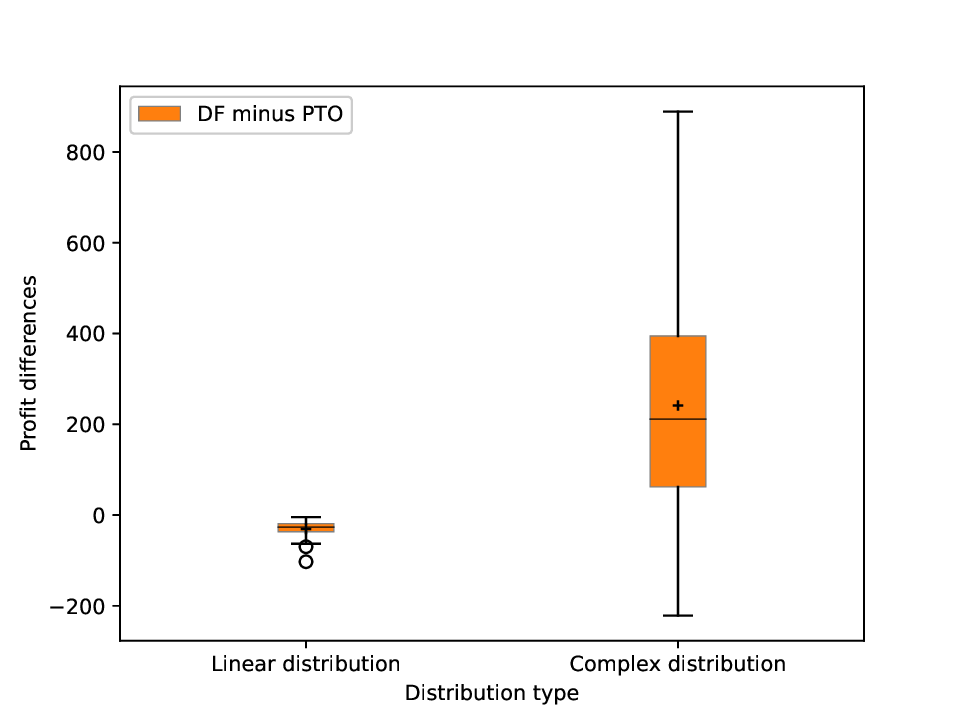}
        \label{subfig::DF-diff}
	}
    
    \label{fig::distribution-free}
\end{figure}

In summary, our experiments show that the distribution-free property is necessary and beneficial for our model. Though increases the complexity, it can help to overcome the challenge of estimating the unknown demand distribution, and is more robust to various distributional assumptions that may not hold in practice. The result also suggests that the AGD algorithm can handle the complexity brought by the distribution-free property.

\section{Conclusion}\label{sec::conclusion}

In this work, we study a data-driven, decision-dependent newsvendor pricing problem. The demand distribution is unknown and depends on pricing decision. The demand has to be learned from the historical features and prices. We develop the approximation model to address the distribution-free characteristic of demand. But we find the decision-dependent characteristic brings significant complexity to the approximate model. Therefore, we develop the concept of approximate gradient and propose the AGD algorithm to efficiently and effectively solve the approximate model. We show that our the theoretical guarantee of our AGD algorithm under nonconvex, convex and strongly-convex conditions. Our simulation experiments demonstrate that our model can approximately converge to the local optimal point, even with a small sample size Moreover, the results on the real-world dataset indicates that the pricing decisions generated by the AGD algorithm are similar to those made in practice. 


Our study finds that compare to the traditional newsvendor problem, the decision-dependent newsvendor pricing problem is important. Though the demand may not be solely determined by price, ignoring the impact of pricing decisions on demand can result in decisions that deviate significantly from the optimal profit. Furthermore, we highlight the advantages of data-driven methods, and provide insights into the priority of distribution-free methods. When the demand distribution is close to the predictive model, the PTO framework is preferred. But the predict model may fail when predicting the demand of new products or initial pricing. In these cases, the AGD algorithm-based data-driven approach can adapt to complex and unknown demand distributions and decision-dependent rule.


Our work is not limited to the study of inventory management problems. As shown in (\ref{eq::appr-obj}) and (\ref{eq::appr-grad}), neither the construction of approximate models nor the implementation of the AGD algorithm is dependent on the form of the profit function. Therefore, our work can be extended to most unconstrained decision-dependent models such as the strategic classification problem in ML. Moreover, the concept of approximate gradient is not limited by the specific approximation approach. In our work, we use the weight approximation method because it is effective in handling the decision-dependent property. The AGD algorithm can also adapt to other approximation approach such as SAA. It will achieve high accuracy if the corresponding approximate approach is reliable.


In our future work, we seek to apply the our data-driven method to multi-period inventory models. In multi-period models, the demand will depend not only on the current price but also on historical pricing decisions. The concept of price-adjustment cost will also be more important. To consider the optimal value of multi-period profit, we may combine data-driven approximation methods with dynamic programming methods. Transforming this type of multi-period inventory problem into a reinforcement learning problem and combining it with data-driven methods may also be a possible direction for future research. We are also interested in applying the concept of approximate gradient to constrained problems, since the gradient of constraint functions can also be approximated through the same approach, which can connect the approximate model to some convergence conclusions.

\ACKNOWLEDGMENT{Wenxuan Liu acknowledges the support from the department of Industrial Engineering, Tsinghua University and is grateful for the support from the mentor. The authors are indebted to the department editor, senior editor, and referees for their valuable and constructive suggestions.}


\bibliographystyle{pomsref}

 \let\oldbibliography\thebibliography
 \renewcommand{\thebibliography}[1]{%
    \oldbibliography{#1}%
    \baselineskip14pt 
    \setlength{\itemsep}{10pt}
 }
\bibliography{reference.bib}



\ECSwitch 

\ECHead{E-Companion for Solving Data-Driven Newsvendor Pricing Problems with Decision-Dependent Effect}


\section{Definition of Weight Functions}\label{sec::appd-weight-def}

In this section, we present some definitions of the weight functions that can be used to construct the approximate model (\ref{eq::appr-obj}). 

\begin{definition}[Kernel regression weight]
	We can use the kernel function that measures the distances in $(p, z)$ to construct the weight function: 

	\begin{equation}
		\label{eq::weight-kernel}
		w^{\mbox{KR}, i}(p,z) = \frac{K_h\left((p, z) - (p^i,z^i)\right)}{\sum_{j=1}^nK_h((p, z) - (p^j,z^j))}
	\end{equation}
	
	where $K_h: \mathbb{R}^{dim(z)+1}\rightarrow \mathbb{R}$ is the kernel function with bandwidth $h$. Common kernel functions include the uniform kernel, triangular kernel and Gaussian kernel. If not noted, the kernel functions below refer to the Guassian kernel function: 
	
	\begin{equation}
		K(z) = \frac{1}{\sqrt{2\pi}}\mbox{exp}^{-\Vert z\Vert_2^2/2}
	\end{equation}
\end{definition}

\begin{definition}[CART weight]
	The CART weight functions are given by: 

	\begin{equation}
		\label{eq::weight-CART}
		w^{\mbox{CART}, i}(p, z)=\frac{\mathbb{I}\{R((p,z))=R((p^i, z^i))\}}{|\{j:R((p^j,z^j))=R((p,z))\}|}
	\end{equation}
	
	where $R: Z \rightarrow \{1,...,r\}$ is the function that maps features to the $r$ leaves on the CART. In the CART, a leaf is a collection of sample points that are classified to the same group. 
\end{definition}

\begin{definition}[Random forest weight]
	The random forest weight functions are given by: 

	\begin{equation}
		\label{eq::weight-RF}
		w^{\mbox{RF}, i}(p, z)=\frac{1}{N_E}\sum^{N_E}_{e=1}w^{\mbox{CART}, i, e}(p, z)
	\end{equation}
	
	where $N_E$ is the number of estimators in the random forest, and $w^{\mbox{CART}, i, e}(p, z)$ is the CART weight of the $e$th estimator in the random forest. 
	
	One of the advantage of random forest weight is that the variance will not get large as $N_E$ increases, while the estimation will be more accurate. The only cost is that it will consume more time to calculate the random forest weight if $N_E$ become larger.
\end{definition}

\section{Description of Solution Methods}


In this section, we explain the solution methods adopted in the numerical experiment section. 

\subsection{Diminishing Step}

The diminishing step adopt the step size $\eta^r$ such that $\eta^r>\eta^{r+1}$ and $\sum_{r=0}^{\infty}\eta^r=\infty$. A typical choice is $\eta^r=C/(r+1)$, where $C$ is a constant that can be adjusted to suit different problems. 

\subsection{Armijo Step}

Let $f(\cdot)$ denote the objective function we want to minimize. The Armijo principle chooses the step size $\eta^r$ by the following steps (we denote the approximate gradient as $G^N$ and the ascent direction as $d^r$) in algorithm \ref{algo::armijo}

\begin{algorithm}[ht]
	\renewcommand{\algorithmicrequire}{\textbf{Input:}}
	\renewcommand{\algorithmicensure}{\textbf{Output:}}
	\caption{Armijo step size}
	\label{algo::armijo}
	\begin{algorithmic}[1]
		\Require iteration solution $x^r$, $\alpha_0$, $\beta \in (0,1)$, $\sigma \in [0, 1)$, tolerance $\epsilon$.
		\Ensure step size $\eta^r$;
		\State $\eta^r = \alpha_0$;
        \State $x^{r+1} = x^r + \eta^r d^r$. 
		\While {$\eta^r \ge \epsilon$ and $f(x^{r}) - f(x^{r+1})< \sigma\eta^r (G^N)^Td^r$}
			\State $\eta^r = \eta^r * \beta$;
			\State $x^{r+1} = x^r + \eta^r d^r$. 
		\EndWhile
		\If {$\eta^r < \epsilon$}
            \State \Return Stop the iteration
        \Else
            \State \Return $\eta^r$
        \EndIf
	\end{algorithmic}
\end{algorithm}

Note that the hyperparameter $\sigma$ can be $0$ in our problem. When $\sigma=0$, the armijo step size ensure that the objective function descent in an approximate context. We also show the special meaning when $\sigma=0$ in Proposition \ref{prop::converge-stationary-armijo}. 

\subsection{Decision-independent approach}

The decision independent approach adopt a different model than the AGD algorithm: 

\begin{align}
	\mbox{(\textbf{Decision-independent Model})}\min_{p,q}\hat f(p,q;z)\overset{def}{=}\sum_{i=1}^Nw^i(z)l_a(p,q,D^i), \label{eq::appr-obj-decision-indep}
\end{align}

The only difference between (\ref{eq::appr-obj-decision-indep}) and the original approximate model (\ref{eq::appr-obj}) is the variables in the weight function. In the decision-independent model, we remove the decision variable from the weight function as an ignorance of the decision-dependent effect. We then use some optimization tool to solve (\ref{eq::appr-obj-decision-indep}) since it is much easier to solve without the appearance of decision variables in the weight function. We can also use AGD algorithm to solve (\ref{eq::appr-obj-decision-indep}) but the solution sequence is not convergent since the estimation on the gradient become worse without the consideration on the decision-dependent effect.

\subsection{Pure SAA}\label{sec::appd-pureSAA}

The Pure SAA approach is an approximate model that ignores features: 

\begin{align}
	\mbox{(\textbf{Pure SAA Model})}\min_{p,q}\hat f(p,q;z)\overset{def}{=}\frac 1 n \sum_{i=1}^Nl_a(p,q,D^i), \label{eq::appr-obj-decision-pureSAA}
\end{align}

The pure SAA model ignores both the decision-dependent effect and the feature. It can be treated as a decision-independent model without feature or an approximate model with kNN $k=N$ weight function.

\subsection{Predict-then-optimize}

The predict-then-optimize approach first predict the demand by applying linear regression on the training set. After performing the linear regression, we get the demand function $D(p, z)$. Then we can substitute the demand in the cost function $l(p,q,D)$ with $D(p,z)$ and optimize the model on $p, q$. Note that the prediction method is not limited to linear regression, all predict approaches that can lead to a demand function $D(p,z)$ can fit the PTO paradigm.

\section{Proofs}

\begin{proof}{Proof of Proposition \ref{prop::grad-converge}}
	Proposition \ref{prop::grad-converge} is actually a corollary of theorem EC.9 in \cite{Bertsimas-2019}. To prove the proposition, we only need to validate the assumptions.
	
	Since for every $p$, the marginal distribution of $D,Q\sim f_{D}(p,z)$ is independent of $p$ conditioned on $z$, the ignorability assumption satisfies. The cost gradient defined in (\ref{eq::profit-grad}) is well defined since the historical demand and supply cannot go to infinity. And it is also equicontinuous. The feasible region for $p,q$ is nonempty, closed and bounded we only restrict the up and down limit of the two decisions. Therefore, Proposition \ref{prop::grad-converge} follows by EC.9 in \cite{Bertsimas-2019}.
\end{proof}

\begin{proof}{Proof of Proposition \ref{prop::fail-converge-grad}}
	Assume that for any $p,q,z$, the value range of $D$ remains to be the same set $\Omega$. We rewrite the objective expectation to the integrate form: 

	\begin{equation*}
		\partial_{p,q}\mathbb{E}_{D\sim f_{D}(p,z)}[l(p,q,D)] = \partial{p,q}\int_{D\in\Omega} l(p,q,D) f_{D}(p,q,D) dD
	\end{equation*}

	Suppose that the derivative of $l(p,q,D) f_{D}(p,q,D)$ can be bounded by an $L^1$ function $g$ for all $p,q,D,Q$, then the derivative and integration operator can be switched. 

	\begin{equation*}
	\begin{aligned}
		\partial_{p,q}\mathbb{E}_{D\sim f_{D}(p,z)}[l(p,q,D)] =& \int_{D\in\Omega}\partial_{p,q} \left( l(p,q,D) f_{D}(p,q,D) \right) dD \\
		=& \int_{D\in\Omega} \left(\partial_{p,q} l(p,q,D)\right) f_{D}(p,q,D) dD \\
		&+ \int_{D\in\Omega} \left(\partial_{p,q} f_{D}(p,q,D)\right) l(p,q,D) dD \\
		=& \mathbb{E}_{D\sim f_{D}(p,z)}[\partial_{p, q}l(p,q,D)] \\
		&+ \int_{D\in\Omega} \left(\partial_{p,q} f_{D}(p,q,D)\right) l(p,q,D) dD
	\end{aligned}
	\end{equation*}

	Generally, the second term of the last equation is not $0$. So the expectation of cost gradient do not equal to the gradient of objective expectation and thus the convergence of approximate gradient fails.
\end{proof}

Before we begin to proof the convergence results, we first state some important results. The following lemmas show how Assumption \ref{assum::eps-sensitive} affects the distance between expectations of different distributions.

\begin{lemma}{Kantorovich-Rubinstein}\label{lem::Kanto-Rubin}
    For all function $f$ that is $1-$Lipschitz

    \begin{equation*}
        \Vert\mathbb{E}_{d\sim D(p)}\mathbb{E}[f(d)] - \mathbb{E}_{d\sim D(p')}\mathbb{E}[f(d)]\Vert \leq W_1(D(p), D(p'))
    \end{equation*}

\end{lemma}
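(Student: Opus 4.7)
The plan is to prove this as a standard consequence of the definition of the earth mover's (1-Wasserstein) distance together with the Lipschitz property of $f$, essentially reproducing one direction of the classical Kantorovich-Rubinstein duality. Since only the inequality $\leq$ is claimed (not the equality), I will not need the full strength of linear programming duality; a direct coupling argument suffices.

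First I would recall the coupling definition of the earth mover's distance: for two distributions $\mu,\nu$ on a common metric space,
\begin{equation*}
W_1(\mu,\nu) = \inf_{\pi \in \Pi(\mu,\nu)} \int \Vert x - y\Vert\, d\pi(x,y),
\end{equation*}
where $\Pi(\mu,\nu)$ is the set of joint distributions on the product space whose marginals are $\mu$ and $\nu$. Applying this definition with $\mu = D(p)$ and $\nu = D(p')$, the proof reduces to showing that for \emph{every} admissible coupling $\pi$ the inequality
\begin{equation*}
\bigl\Vert \mathbb{E}_{d\sim D(p)}[f(d)] - \mathbb{E}_{d\sim D(p')}[f(d)] \bigr\Vert \leq \int \Vert x - y\Vert\, d\pi(x,y)
\end{equation*}
holds, since then taking the infimum over $\pi$ yields the claimed bound.

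To establish this pointwise-in-$\pi$ inequality, I would rewrite both expectations as integrals against $\pi$ by using its marginal property: $\mathbb{E}_{d\sim D(p)}[f(d)] = \int f(x)\, d\pi(x,y)$ and similarly $\mathbb{E}_{d\sim D(p')}[f(d)] = \int f(y)\, d\pi(x,y)$. Subtracting, pulling the norm inside the integral by the triangle inequality, and finally invoking the 1-Lipschitz hypothesis $\Vert f(x) - f(y)\Vert \leq \Vert x - y\Vert$ gives the desired bound on the integrand. Taking the infimum over $\pi \in \Pi(D(p), D(p'))$ on the right-hand side completes the argument.

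There is no real obstacle: the only minor technicality is a measurability/integrability check to justify exchanging the expectation with the integral against $\pi$ and moving the norm inside, which is immediate under the standing assumption that the random parameter has bounded support (Assumption~\ref{assum::same-limit-range}) or, more generally, finite first moment. I would remark that this lemma is purely a statement about Lipschitz functions and distributions and does not rely on any feature of the newsvendor model; it is invoked later as the key device for converting the $\epsilon$-sensitivity bound (Assumption~\ref{assum::eps-sensitive}) into bounds on differences of expected profits and expected gradients.
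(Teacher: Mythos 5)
Your proof is correct, but it is worth noting that the paper itself offers no proof of this lemma at all: it is stated as the classical Kantorovich--Rubinstein result and used as a black box, with the actual proof appearing only for the subsequent Lemma~\ref{lem::eps-distribution-distance}. Your coupling argument supplies the missing derivation in a clean way: observing that only the $\leq$ direction of the duality is claimed, you bound the difference of expectations against an arbitrary coupling $\pi \in \Pi(D(p),D(p'))$ via the marginal property, move the norm inside the integral, apply the $1$-Lipschitz hypothesis, and pass to the infimum --- exactly the standard ``easy direction'' of Kantorovich--Rubinstein, with the integrability caveat correctly dispatched by the bounded-support assumption. One genuinely useful byproduct of your route: because you pull the norm inside the integral rather than invoking scalar duality, your argument works verbatim for vector-valued $1$-Lipschitz $f$, which is strictly stronger than what the paper extracts from the lemma. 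The paper only ever applies Lemma~\ref{lem::Kanto-Rubin} to the scalar functions $e^T f$ (see its proof of Lemma~\ref{lem::eps-distribution-distance}, which exists precisely to lift the scalar bound to $\mathbb{R}^d$-valued $f$ by projecting onto the unit vector $e$); under your vector-valued formulation, that entire projection step becomes unnecessary and Lemma~\ref{lem::eps-distribution-distance} follows in one line. So your proposal is not only a valid substitute for the missing proof but would also let the paper streamline the lemma that follows it.
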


\begin{lemma}
    \label{lem::eps-distribution-distance}
	Suppose Assumption \ref{assum::eps-sensitive} holds. Let $f: R^n\rightarrow R^d$ be an $L-$Lipschitz function, and let $X, X'\in \mathbb{R}^n$ be random variables such that $W_1(X, X')\leq C$. Then

	\begin{equation}
		\Vert \mathbb{E}[f(X)] - \mathbb{E}[f(X')] \Vert_2\leq LC
	\end{equation}
\end{lemma}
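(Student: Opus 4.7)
The plan is to reduce the vector-valued claim to the scalar Kantorovich--Rubinstein duality already stated in Lemma \ref{lem::Kanto-Rubin}. Note first that Assumption \ref{assum::eps-sensitive} is not actually needed for this lemma; the hypothesis $W_1(X,X')\leq C$ is given directly, so the proof will only invoke KR duality together with Lipschitz continuity of $f$.

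First I would rescale and project. Let $u\in\mathbb{R}^d$ be an arbitrary unit vector and define $g_u(x) = u^{\top} f(x)/L$. By Cauchy--Schwarz and the $L$-Lipschitz property of $f$,
\begin{equation*}
|g_u(x) - g_u(y)| \leq \tfrac{1}{L}\|u\|_2 \,\|f(x)-f(y)\|_2 \leq \|x-y\|_2,
\end{equation*}
so $g_u$ is a scalar $1$-Lipschitz function on $\mathbb{R}^n$. Applying Lemma \ref{lem::Kanto-Rubin} to $g_u$ yields
\begin{equation*}
\bigl| u^{\top}\mathbb{E}[f(X)] - u^{\top}\mathbb{E}[f(X')] \bigr| \leq L\,W_1(X,X') \leq LC.
\end{equation*}

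Next I would turn this projection bound into a norm bound. Let $\Delta = \mathbb{E}[f(X)] - \mathbb{E}[f(X')]$. If $\Delta = 0$ the claim is trivial; otherwise set $u = \Delta/\|\Delta\|_2$, which is a unit vector, and the displayed inequality becomes $\|\Delta\|_2 \leq LC$, giving exactly the conclusion.

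The main obstacle, if any, is justifying the interchange of $u^{\top}$ with the expectation and confirming that KR duality extends through the $\sup_u$ over projections; both are routine once $f$ is assumed integrable under $X$ and $X'$ (which is implicit in writing $\mathbb{E}[f(X)]$). A minor technicality is that Lemma \ref{lem::Kanto-Rubin} as stated applies to distributions on the real line/scalar test functions, so the projection step is really the content; everything else is algebra.
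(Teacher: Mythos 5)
Your proposal is correct and follows essentially the same route as the paper's own proof: project the difference of expectations onto the unit vector $e=\Delta/\Vert\Delta\Vert_2$, observe that $e^{\top}f$ is a scalar Lipschitz function, and invoke the Kantorovich--Rubinstein lemma to bound the projection by $LC$. Your version is in fact slightly tidier than the paper's — you handle the rescaling to a $1$-Lipschitz test function and the degenerate case $\Delta=0$ explicitly, and you correctly note that Assumption \ref{assum::eps-sensitive} is never actually used since the bound $W_1(X,X')\leq C$ is hypothesized directly.
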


\begin{proof}{Proof of Lemma \ref{lem::eps-distribution-distance}}

    Since

    \begin{equation*}
        \begin{aligned}
        \Vert \mathbb{E}[f(X)] - \mathbb{E}[f(X')] \Vert_2^2 &= (\mathbb{E}[f(X)] - \mathbb{E}[f(X')])^T(\mathbb{E}[f(X)] - \mathbb{E}[f(X')]) \\
        &= \Vert \mathbb{E}[f(X)] - \mathbb{E}[f(X')] \Vert_2\frac{(\mathbb{E}[f(X)] - \mathbb{E}[f(X')])^T}{\Vert \mathbb{E}[f(X)] - \mathbb{E}[f(X')] \Vert_2}(\mathbb{E}[f(X)] - \mathbb{E}[f(X')])
        \end{aligned}
    \end{equation*}

    we define the unit vector $e:=\frac{(\mathbb{E}[f(X)] - \mathbb{E}[f(X')])^T}{\Vert \mathbb{E}[f(X)] - \mathbb{E}[f(X')] \Vert_2}$, we can get: 

    \begin{equation*}
        \Vert \mathbb{E}[f(X)] - \mathbb{E}[f(X')] \Vert_2^2=\Vert \mathbb{E}[f(X)] - \mathbb{E}[f(X')] \Vert_2(\mathbb{E}[e^Tf(X)]-\mathbb{E}[f(X')])
    \end{equation*}

    Since $f$ is a one-dimensional $L-$lipschitz function, we can apply Lemma \ref{lem::Kanto-Rubin} and Assumption \ref{assum::eps-sensitive} to obtain that for all $e$,

    \begin{equation*}
        \Vert \mathbb{E}[f(X)] - \mathbb{E}[f(X')] \Vert_2^2\leq \Vert \mathbb{E}[f(X)] - \mathbb{E}[f(X')] \Vert_2 LC
    \end{equation*}

    Thus completing the proof
\end{proof}

\begin{proof}{Proof of Proposition \ref{prop::converge-stationary}}

    The proof is divided into two steps. In the first step, we prove that the objective function $\E{D}{x}{l}$ has Lipschitz gradient in $x$. Then we prove that under diminishing step, any converging subsequence converge to the stationary point.
    
    We denote $f(x) = \E{D}{x}{l}$, then 
    
    \begin{equation*}
        \Vert \nabla_xf(x) - \nabla_xf(y)\Vert = \Vert\nablaE{D}{x}{l}-\nablaE{D}{y}{l}\Vert
    \end{equation*}
    
    According to Assumption \ref{assum::switch-int}, we can write the expectation to integrate form and change the integrate operator and derivative operator. 
    
    \begin{equation*}
    \begin{aligned}
        \Vert \nabla_xf(x) - \nabla_xf(y)\Vert=&\Vert\int_{D\in\Omega }\nabla_x (l(x,D)f_D(x,D))dv - \int_{D\in\Omega} \nabla_x (l(y,D)f_D(y,D))dv\Vert\\
        \leq& \Vert \int_D l(x,D)(\nabla_xf_D(x,D))-l(y,D)(\nabla_xf_D(y,D))dv \Vert \\ 
        &+ \Vert \int_D(\nabla_xl(x,D)) f_D(x,D) - (\nabla_x l(y,D))f_D(y,D)dv \Vert \\ 
        =& I + II
    \end{aligned}
    \end{equation*}
    
    The second inequality follows by the multiplication rule of derivative. We then analyze $I$ and $II$ respectively.
    
    \begin{equation*}
    \begin{aligned}
        I\leq& \Vert\int_Dl(x,D)\nabla_xf_D(x,D)dv - \int_Dl(x,D)\nabla_xf_D(y,D)dv\Vert\\
        &+\Vert\int_Dl(x,D)\nabla_xf_D(y,D)dv - \int_Dl(y,D)\nabla_xf_D(y,D)dv\Vert\\
        \leq& \int_D|l(x,D)|\Vert \nabla_xf_D(x,D) - \nabla_xf_D(y,D) \Vert dv \\
        &+ \int_D |l(x,D)-l(y,D)|\Vert \nabla_xf_D(y,D) \Vert dv \\
        \leq& S_\Omega L_4L_3\Vert x-y\Vert + S_\Omega L_5L_1\Vert x-y\Vert
    \end{aligned}
    \end{equation*}
    
    The first inequality holds from the triangular inequality. The second inequality holds by the Cauchy-Schwarz inequality. The third inequality holds by the Lipschitz continuous characteristic and intermediate value theorem, where $S_\Omega$ denotes of the set $\Omega$.
    
    We can also bound the second term by the following steps: 
    
    \begin{equation*}
    \begin{aligned}
        II\leq& \Vert \int_D \nabla_x l(x,D) f_D(x,D)dv - \int_D\nabla_xl(x,D)f_D(y,D)dv \Vert\\
        &+ \Vert\int_D\nabla_xl(x,D)f_D(y,D)dv - \int_D\nabla_xl(y,D)f_D(y,D)dv\Vert\\
        =& \Vert \Enabla{D}{x}{l} - \mathbb{E}_{D\sim f_D(y,D)}\nabla_x l(x,D)\Vert + \Vert \mathbb{E}_{D\sim f_D(y,D)}[\nabla_xl(x,D)-\nabla_xl(x,D)] \Vert \\
        \leq& \epsilon L_2\Vert x-y\Vert + L_2\Vert x - y\Vert
    \end{aligned}
    \end{equation*}
    
    The first inequality holds by the triangular inequality. The first equality holds by the definition of expectation. The second inequality holds by Lemma \ref{lem::eps-distribution-distance} and the definition of Lipschitz gradient.
    
    Thus, $\Vert \nabla_xf(x) - \nabla_xf(y)\Vert \leq [(\epsilon+1)L_2 + S_\Omega(L_1L_5+L_4L_3)] \Vert x-y\Vert$. Hence the objective function has Lipschitz gradient and $L = (\epsilon+1)L_2 + S_\Omega(L_1L_5+L_4L_3)$.
    
    recall that the update rule is given by
    
    \begin{equation*}
        x^{r+1}_N = x^r_N + \eta^rG^N(x;z) 
    \end{equation*}
    
    From descent lemma, we have 
    
    \begin{equation*}
        f(x^{r+1}_N)\leq f(x^r_N) + \eta^rG^N(x^r_N;z) ^T \nabla f(x^r_N) + \frac{L(\eta^r)^2}{2} \Vert G^N(x^r_N;z)\Vert^2
    \end{equation*}
    
    Taking $N\rightarrow \infty$ on both sides, since $f(x)$ is continuous and $\lim_{N\rightarrow \infty}G^N(x;z) = \Enabla{D}{x}{l}$ from Proposition \ref{prop::grad-converge}. 
    
    \begin{equation*}
            f(x^{r+1})- f(x^r) \leq \eta^r\Enabla{D}{x^r}{l} ^T \nabla f(x^r) + \frac{L(\eta^r)^2}{2} \Vert \Enabla{D}{x^r}{l}\Vert^2
    \end{equation*}
    
    where $x^r = \lim_{N\rightarrow\infty} x^r_N$. 
    
    Since
    
    \begin{equation*}
    \begin{aligned}
        \Enabla{D}{x^r}{l}^Tf(x^r) =& \Vert \nablaE{D}{x^r}{l} \Vert^2 + \Vert\Enabla{D}{x^r}{l} \Vert^2 \\
        &-\Vert \int_Dl(x^r,D)\nabla_x f_D(x^r,D)dv \Vert^2 \\
        \ge& (\Vert \nablaE{D}{x^r}{l} \Vert^2 - L_4^2L_5^2S^2_\Omega) + \Vert\Enabla{D}{x^r}{l} \Vert^2
    \end{aligned}
    \end{equation*}
    
    Note that since the range of $D$ is limited, we can scale the random parameters $D$ so that $S_\Omega$ is sufficiently small. Thus
    
    \begin{equation*}
        \Enabla{D}{x^r}{l}^Tf(x^r) \ge \Vert\Enabla{D}{x^r}{l} \Vert^2 
    \end{equation*}
    
    Therefore, 
    
    \begin{equation*}
        f(x^{r+1})- f(x^r)\leq -\eta^r (1 - \frac{L\eta^r}{2}) \Vert\Enabla{D}{x^r}{l} \Vert^2 
    \end{equation*}
    
    Since $\eta^r$ is diminishing, for any $\xi\in (0,1)$, there exists $\bar r$ such that for any $r \ge \bar r$, we have
    
    \begin{equation*}
        f(x^{r+1})- f(x^r)\leq -\eta^r \xi \Vert\Enabla{D}{x^r}{l} \Vert^2 
    \end{equation*}
    
    Since $\lim_{r\in \mathcal{K}\rightarrow\infty} x^r=\bar x$ and $f(x)$ is continuous, we have $\lim_{r\rightarrow\infty}f(x^r)=f(\bar x)$. Taking summation on both sides from $r=\bar r$ to $\infty$, we can obtain that 
    
    \begin{equation*}
        \sum_{r=\bar r}^\infty\eta^r\xi \Vert\Enabla{D}{x^r}{l} \Vert^2\leq f(x^{\bar r}) - \lim_{r\rightarrow\infty} f(x^r)
    \end{equation*}
    
    Since $\sum_{r=\bar r}^\infty \alpha^r = +\infty$, we have $\lim_{r\in\mathcal{K}\rightarrow\infty}\Vert\Enabla{D}{x^r}{l} \Vert^2 = 0$, hence $\mathbb{E}_{D\sim f_D(\bar x, D)} [\nabla_x l(\bar x, D)] = 0$ and the proof is completed.

\end{proof}

\begin{proof}{Proof of Proposition \ref{prop::converge-stationary-armijo}}
    To simplify the denotation, we omit the limitation of $N\rightarrow\infty$. Therefore the descent direction is $d^r=-\Enabla{D}{x^r}{l}$, where $x^r = \lim_{N\rightarrow \infty}x^r_N$. And we denote $f(x) = \E{D}{x^r}{l}$. According to the armijo principle: 
    
    
    \begin{equation*}
        f(x^r)-f(x^{r+1})\ge-\eta^r\sigma \Vert\Enabla{D}{x^r}{l}\Vert^Td^r
    \end{equation*}

    Since $\lim_{r(\in \mathcal{K})\rightarrow\infty}\sup_r\Vert\Enabla{D}{x^r}{l}\Vert\ge 0$. The sequence $\E{D}{x^r}{l}$ decreases monotonically and have a lower bound. Thus 
    
    \begin{equation*}
        \lim_{r(\in \mathcal{K})\rightarrow\infty} f(x^r) - f(x^{r+1}) = 0,
    \end{equation*}
    
    which is followed by 
    
    \begin{equation*}
        \lim_{r(\in \mathcal{K})\rightarrow\infty}\eta^r=0
    \end{equation*}
    
    Hence, by the definition of the armijo rule, we must have for some index $\bar r\ge 0$

    \begin{equation*}
        f(x^r) - f(x^r + \frac{\eta^r}{\beta}d^r)<-\sigma\frac{\eta^r}{\beta}\Vert\Enabla{D}{x^r}{l}\Vert^Td^r, \forall r\in\mathcal{K}, r\ge \bar r
    \end{equation*}

    We denote

    \begin{equation*}
        p^r = \frac{d^r}{\Vert d^r\Vert},\quad \bar\eta^r=\frac{\eta^r\Vert d^r\Vert}{\beta}
    \end{equation*}
    
    Since $\Vert p^r\Vert = 1$, there exists a subsequence $\{p^r\}_{\bar {\mathcal{K}}}$ of $\{p^r\}_\mathcal{K}$ such that $\{p^r\}_{\bar {\mathcal{K}}}\rightarrow \bar p$, where $\bar p$ is a unit vector. Then

    \begin{equation*}
        \frac{f(x^r) - f(x^{r+1})}{\bar \eta^r}<-\sigma (\Enabla{x^r}{D}{l})^T p^r
    \end{equation*}

    Hence, 

    \begin{equation}
        \label{eq::appd-armijo-proof-mean-value-before}
        \frac{f(x^r) - \mathbb{E}_{D\sim f_D(x^r)}[l(x^{r+1},D)] + \mathbb{E}_{D\sim f_D(x^r)}[l(x^{r+1},D)] - f(x^{r+1})}{\bar \eta^r}<-\sigma (\Enabla{x^r}{D}{l})^T p^r
    \end{equation}

    By Lemma \ref{lem::eps-distribution-distance}, $f(x^r) - f(x^{r+1})\ge -\epsilon L_1\Vert \bar\eta^rp^r \Vert$

    By using the mean value theorem, 

    \begin{equation}
        \label{eq::appd-armijo-proof-mean-value-after}
        \begin{aligned}
            \frac{-\epsilon L_1\Vert \bar\eta^rp^r \Vert}{\bar \eta^r} &+ \mathbb{E}_{D\sim f_D(x^r)}[\nabla l(x^r+\tilde{\alpha}^rp^r)]^Tp^r \\
            &< -\sigma (\Enabla{x^r}{D}{l})^T p^r
        \end{aligned}
    \end{equation}

    Let $r(\in \bar{\mathcal{K}})\rightarrow \infty$,

    \begin{equation*}
        -\epsilon L_1 - (\Enabla{D}{\bar x}{l})^T \bar p < -\sigma \Enabla{D}{\bar x}{l}^T\bar p
    \end{equation*}

    Substituting $d^r=-\Enabla{D}{x^r}{l}$, we have 

    \begin{equation*}
        -\epsilon L_1 < -(1-\sigma) \Vert\Enabla{D}{\bar x}{l}\Vert
    \end{equation*}

    which completes the proof.

\end{proof}

\begin{proof}{Proof of Theorem \ref{theo::nece-cond}}
    We proof the theorem by contradiction. Suppose $x^*$ maximize $\max_{x} f(x) = \E{D}{x}{l}$, and $\Vert \Enabla{D}{x^*}{l} \Vert > L_1\epsilon$. Then for any $x_1\in X$,
    
    \begin{equation*}
    \begin{aligned}
        f(x_1)-f(x^*) =& \left( \E{D}{x_1}{l}-\E{D}{x}{l} \right) \\
        =& \left( \E{D}{x_1}{l} - \mathbb{E}_{D\sim f_D(x^*)}[l(x_1,D)] \right) \\
        &+ \left( \mathbb{E}_{D\sim f_D(x^*)}[l(x_1,D)] - \E{D}{x}{l} \right) \\
    \end{aligned}
    \end{equation*}
    
    From Lemma \ref{lem::eps-distribution-distance}, we have 
    
    \begin{equation*}
        | \E{D}{x_1}{l} - \mathbb{E}_{D\sim f_D(x^*)}[l(x_1,D)] | \leq L_1\epsilon\Vert x_1-x^* \Vert
    \end{equation*}
    
    For the second term, we expand $l(x_1, D)$ at $x^*$ and obtain
    
    \begin{equation*}
        \mathbb{E}_{D\sim f_D(x^*)}[l(x_1,D)] - \E{D}{x}{l} = \mathbb{E}_{D\sim f_D(x^*)}[\nabla_x l(x^*,D)] ^T (x_1-x^*) + o(\Vert x_1-x^* \Vert)
    \end{equation*}
    
    where $o(\Vert x_1-x^* \Vert)$ denotes the first-order infinitesimals to $\Vert x_1-x^* \Vert$. By substituting the two terms above and divide both sides by $\Vert x_1-x^* \Vert$, we obtain
    
    \begin{equation*}
        \frac{f(x_1)-f(x^*)}{\Vert x_1-x^* \Vert} \ge -L_1\epsilon + \mathbb{E}_{D\sim f_D(x^*)}[\nabla_x l(x^*,D)] ^T \frac{(x_1-x^*)}{\Vert x_1-x^* \Vert} + \frac{o(\Vert x_1-x^* \Vert)}{\Vert x_1-x^* \Vert}
    \end{equation*}
    
    We let $x_1-x$ take the same direction of $\Enabla{D}{x^*}{l}$, hence the second term on the right side becomes $\Vert \Enabla{D}{x^*}{l} \Vert$. Therefore, for any $\xi > 0$, there exists $x_1$ that is sufficiently close to $x^*$ such that 
    
    \begin{equation*}
        \frac{f(x_1)-f(x^*)}{\Vert x_1-x^* \Vert} \ge -L_1\epsilon + \Vert \Enabla{D}{x^*}{l} \Vert - \xi
    \end{equation*}
    
    Since $\Vert \Enabla{D}{x^*}{l} \Vert > L_1\epsilon$ and $\xi$ can be sufficiently small, we have $f(x_1)-f(x^*) > 0$, which contradicts with the condition that $f(x^*)$ is the optimal solution. 
            
\end{proof}


    






\begin{proof}{Proof of Theorem \ref{theo::convergence-convex}}
    We analyze the error of $x^{k+1}_N$: 
    
    \begin{equation*}
    \begin{aligned}
        \Vert x^{k+1}_N - x^* \Vert^2 =& \Vert x^k_N - \eta^k G^N(x^k_N,D) - x^*\Vert^2\\
        =& \Vert x^k_N - x^*\Vert^2 - 2\eta^k G^N(x^k_N,D)^T(x^k_N - x^*) + (\eta^k)^2\Vert G^N(x^k_N,D) \Vert^2
    \end{aligned}
    \end{equation*}
    
    let $N\rightarrow +\infty$ for both sides, denote $\lim_{N\rightarrow\infty} x^k_N$ as $x^k$ for simplicity. Since $\lim_{N\rightarrow\infty} G^N(x^k_N,D) = \Enabla{D}{x^k}{l}$ for any $x$, we have 
    
    \begin{equation*}
        \Vert x^{k+1} - x^* \Vert^2 = \Vert x^k - x^*\Vert^2 - 2\eta^k \Enabla{D}{x^k}{l}^T(x^k - x^*) + (\eta^k)^2 \Vert \Enabla{D}{x^k}{l} \Vert^2
    \end{equation*}
    
    We bound the second term by convexity of the cost function
    
    \begin{equation*}
    \begin{aligned}
    \Enabla{D}{x^k}{l}^T(x^k - x^*) =& \mathbb{E}_{D\sim f_D(x^k)}[\nabla l(x^k,D)^T(x^k-x^*)] \\ 
    \ge& \mathbb{E}_{D\sim f_D(x^k)}[ l(x^k,D) - l(x^*,D) ] \\
    \end{aligned}
    \end{equation*}
    
    For the third term, we bound by Assumption \ref{assum::conv-lip-cont}. 
    
    \begin{equation*}
    \Vert \Enabla{D}{x^k}{l} \Vert^2 \leq L^c_3 
    \end{equation*}
    
    Thus 
    
    \begin{equation*}
        2\eta^k \mathbb{E}_{D\sim f_D(x^k)}[l(x^k,D) - l(x^*,D)] \leq -\Vert x^{k+1} - x^* \Vert^2 + \Vert x^k - x^*\Vert^2  + (\eta^k)^2 (L^c_3)^2
    \end{equation*}
    
    We further investigate the right side. Since 
    
    \begin{equation*}
    \begin{aligned}
    \mathbb{E}_{D\sim f_D(x^k)}[l(x^k,D) - l(x^*,D)] =& -\mathbb{E}_{D\sim f_D(x^k)}[l(x^*,D)] + \E{D}{x^*}{l}\\
    &- \E{D}{x^*}{l} + \mathbb{E}_{D\sim f_D(x^k)}[l(x^k,D)]\\
    \ge& -| \mathbb{E}_{D\sim f_D(x^k)}[l(x^*,D)] - \E{D}{x^*}{l} |\\
    &- \E{D}{x^*}{l} + \mathbb{E}_{D\sim f_D(x^k,D)}[l(x^k,D)]\\
    \ge& - \epsilon L^c_2 \Vert x^* - x^k\Vert - \E{D}{x^*}{l} + \mathbb{E}_{D\sim f_D(x^k,D)}[l(x^k,D)]
    \end{aligned}
    \end{equation*}
    
    Thus 
    
    \begin{equation*}
    \begin{aligned}
        2\eta^k (\E{D}{x^k}{l} - \E{D}{x^*}{l}) \leq& 2\eta^k\epsilon L^c_2\Vert x^*-x^k\Vert - \Vert x^{k+1}-x^*\Vert^2\\
        &+ \Vert x^{k}-x^*\Vert^2 + (\eta^kL^c_3)^2
    \end{aligned}
    \end{equation*}
    
    Take summation from $r=0$ to $k$ and take the minimum of the left side, we obtain
    
    \begin{equation*}
    \begin{aligned}
        (2\sum_{r=0}^k\eta^r) \min_{0\leq r\leq k}\{\E{D}{x^r}{l} - \E{D}{x^*}{l}\} \leq& 2\epsilon L^c_2 \sum_{r=0}^k \eta^r\Vert x^*-x^k\Vert\\
        &+ \Vert x^0-x^k\Vert^2 + (L^c_3)^2\sum_{r=0}^k (\eta^r)^2	
    \end{aligned}
    \end{equation*}
    
    Hence complete the proof.
    
\end{proof}

\begin{proof}{Proof of Corollary \ref{coro::price-only-converge}}
	We need to determine $L^c_2$, $L^c_3$ in Theorem \ref{theo::convergence-convex}. Since $\nabla_pl(p,q,D) = -(D\wedge q)$, we have $L^c_3\leq(D\wedge q)$. We can rewrite the cost function to $l(p,q,D) = -(p-c)q + (p-s)(q-D)^+$, therefore, $L^c_2 = 1$. Then we substitute $L^c_2$ and $L^c_3$ into Theorem \ref{theo::convergence-convex} complete the proof.
\end{proof}

\begin{proof}{Proof of Theorem \ref{theo::strconv-dist-stable}}
	We investigate the distance between $x^k_N$ and a stable point $x_{PS}$.
	\begin{equation*}
	\begin{aligned}
		\Vert x^{k+1}_N - x_{PS} \Vert^2 =& \Vert x^k_N -\eta G^N(x^k_N;z) - x_{PS}\Vert^2 \\
		=& \Vert x^k_N-x_{PS}\Vert^2 - 2\eta G^N(x^k_N)^T(x_N^k-x_{PS}) + (\eta^2)\Vert G^N(x^k_N)\Vert^2
	\end{aligned}
	\end{equation*}

    We begin by upper bounding the second term. From Proposition \ref{prop::grad-converge}, we know that for any $\xi>0$, there exists a sample size $N_0$ such that $\sup_{x}\Vert G^N(x) - \Enabla{D}{x}{l}\Vert\leq\xi$ for all $N>N_0$. Thus we have
    
    \begin{equation*}
    \begin{aligned}
        G^N(x^k_N)^T(x^k_N-x_{PS})\ge& \Enabla{D}{x^k_N}{l}^T(x^k_N-x_{PS})-\xi\Vert x^k_N-x_{PS}\Vert
    \end{aligned}
    \end{equation*}
    
    We can further bound the first term using the same approach as the proof of proposition 2.3 in \cite{DAU5-Perdomo2020}'s work. They give that 
    
    \begin{equation*}
        \Enabla{D}{x^k_N}{l}^T(x^k_N-x_{PS}) \ge B\Vert x^k_N-x_{PS} \Vert^2
    \end{equation*}
    
    We then bound the third term: 
    
    \begin{equation*}
        \Vert G^N(x^k_N)\Vert^2 \leq \xi^2 + \Vert \Enabla{D}{x^k_N}{l} \Vert^2
    \end{equation*}
    
    We can also adopt the same approach in the proof of proposition 2.3 in \cite{DAU5-Perdomo2020}. They give that under Assumption \ref{assum::conv-lip-cont} and \ref{assum::eps-sensitive},
    
    \begin{equation*}
        \Vert \Enabla{D}{x^k_N}{l} \Vert^2 \leq 2B^2\Vert x^k_N - x_{PS}\Vert^2
    \end{equation*}
    
    Therefore, we obtain
    
    \begin{equation}\label{app::eq::strconvex1}
        \Vert x^{k+1}_N - x_{PS} \Vert^2 \leq (1-2\eta A+2\eta^2B^2)\Vert x^k_N-x_{PS}\Vert^2 + 2\eta \xi \Vert x^k_N-x_{PS}\Vert + \xi^2\eta^2
    \end{equation}
    
    In case 1, to give a reasonable distance bound, we need to choose $\eta$ such that the right-hand side is a perfect quadratic polynomial. Thus we choose $\eta$ such that
    
    \begin{equation*}
        4B^2\eta^2-2A\eta+1 = 0
    \end{equation*}
    
    Note that from the Viete's theorem, the two solutions are both positive since we assume $A>0$. Thus we only need to ensure that the equation have real solution, that is
    
    \begin{equation*}
        4A^2\ge 16B^2, \quad A\ge 2B
    \end{equation*}
    
    And we take the square root two both sides of equation (\ref{app::eq::strconvex1}) 
    
    \begin{equation*}
        \Vert x^{k+1}_N - x_{PS} \Vert \leq \sqrt{1-2\eta A+2\eta^2B^2}\Vert x^k_N-x_{PS}\Vert + \xi\eta
    \end{equation*}
    
    We denote $C=\sqrt{1-2\eta A+2\eta^2B}$ and divide both sides by $C^{k+1}$
    
    \begin{equation*}
        \frac{\Vert x^{k+1}_N - x_{PS} \Vert}{C^{k+1}} \leq \frac{\Vert x^k_N-x_{PS}\Vert}{C^k} + \frac{\xi\eta}{C^{k+1}}
    \end{equation*}
    
    Take the summation on both sides from $0$ to $k+1$ and we obtain
    
    \begin{equation*}
        \Vert x^{k+1}_N-x_{PS}\Vert\leq C^{k+1}\Vert x^1_N-x^*\Vert + \xi \eta \frac{1-C^{k+1}}{1-C}
    \end{equation*}
    
    Note that $\eta A-\eta^2B^2 = \frac{2\eta A+1}{4}>0$, thus $C<1$ and the distance is decreasing. 
    
    Now we focus on case 2. Since the quadratic term on the right-hand side of (\ref{app::eq::strconvex1}) is less than zero, we obtain
    
    \begin{equation}
        \Vert x^{k+1}_N - x_{PS} \Vert^2 \leq 2\eta \xi \Vert x^k_N-x_{PS}\Vert + \xi^2\eta^2
    \end{equation}
    
    Thus 
    
    \begin{equation*}
        \Vert x^{k+1}_N - x_{PS} \Vert^2 - \Vert x^{k}_N - x_{PS} \Vert^2  \leq -\Vert x^{k}_N - x_{PS} \Vert^2 + 2\eta \xi \Vert x^k_N-x_{PS}\Vert + \xi^2\eta^2
    \end{equation*}
    
    If $\Vert x^{k}_N - x_{PS} \Vert \ge (1+\sqrt 2)\xi\eta$, we can derive that $\Vert x^{k+1}_N - x_{PS} \Vert^2 - \Vert x^{k}_N - x_{PS} \Vert^2 \leq 0$, which indicates that although the distance may exceed the bound $(1+\sqrt 2)\xi\eta$ some time, it will decrease immediately until it reach the bound, hence complete the proof.
    
\end{proof}

\begin{proof}{Proof of Corollary \ref{coro::dist-opt}}
	The corollary is proved by imposing the triangular inequality to Lemma \ref{lem::stable-opt-distance} and Theorem \ref{theo::strconv-dist-stable}.
\end{proof}

\begin{proof}{Proof of Corollary \ref{coro::price-adj-converge}}
	We observe that $l_a$ is $\gamma$-strongly convex on $p$. We first give the value of $L^c_1$ and $L_D$ in the price-adjustment case. Since $l_a(p,q,D) = (p-\tilde{p})^2 - (p-c)q + (p-s)(q-D)^+$, we have $L_D = 1$. And $\nabla_p l_a(p,q,D) = -(q\wedge D) + 2\gamma(p-\tilde{p})$. we have $L_1^c\leq 2\gamma |p_m-\tilde{p}|$. Then the proof is complete by using the conclusion in Corollary \ref{coro::dist-opt}.
\end{proof}

\section{Experiment Supplements}

\subsection{Description of data}
\label{sec::appd-data-description}


\begin{table}[htbp]
    \centering
    \caption{Description of real electricity pricing data}
      \begin{tabular}{ccp{14.055em}p{12.555em}}
      \toprule
      Variable & Type  & Description & Statistics \\
      \midrule
      Date  & datetime & the date of the recording & min 1Jan15, max 6Oct20 \\
      Demand & float & a total daily electricity demand in MWh & min 85.1k, median 120k, max 171k \\
      RRP   & float & a recommended retail price in AUD\$/MWh & min 0, median 66.7, max 300 \\
      min temperature & float & minimum temperature during the day in Celsius & min 0.6, median 11.3, max 28 \\
      max temperature & float & maximum temperature during the day in Celsius & min 9, median 19.1, max 43.5 \\
      solar exposure & float & total daily sunlight energy in MJ/m\^2 & min 0.7, median 12.7, max 33.3 \\
      rainfall & float & daily rainfall in mm & min 0, median 0, max 54.6 \\
      school day & boolean & if students were at school on that day & True 69\%, False 31\% \\
      holiday & boolean & if the day was a state or national holiday & True 4\%, False 96\% \\
      \bottomrule
      \end{tabular}%
    \label{tab::real-data-description}%
  \end{table}%
  
\textit{Real data} The real dataset comes from a real-world power plant pricing scenario (https://www.kaggle.com/datasets/aramacus/electricity-demand-in-victoria-australia). This dataset describes the electricity demand and price situation in Victoria, Australia from 2015 to 2020. The distribution of demand can be seen in Figure \ref{fig::real-demand-distribution}. The descriptive information of the real data is shown in Table \ref{tab::real-data-description}. The factors that influence daily demand are \textit{price, temperature, solar exposure, school day} and \textit{holiday}. Note that we perform an artificial transformation on the temperature. We define heating degree day (HDD) as  $HDD = (15 - T_{max})^+$, and cooling heating degree day (CDD) as $CDD = (T_{min} - 18)^+$, where $T_{max}$ and $T_{min}$ are the highest and lowest centigrade temperatures in one day. This transformation can better reflect the relationship between temperature and electricity demand. The demand is sensitive to price, but it also depend on other features such as temperature and holiday. In our work, we consider the temperature, solar, rainfall, school\_day and holiday factors. Note that the scales of features are different, so we standardize the feature to $[0, 1]$ when processing the data. We use Euclidean metric to measure the distance between samples.

\begin{figure}[htbp]
    \centering
    \includegraphics[width=0.6\linewidth]{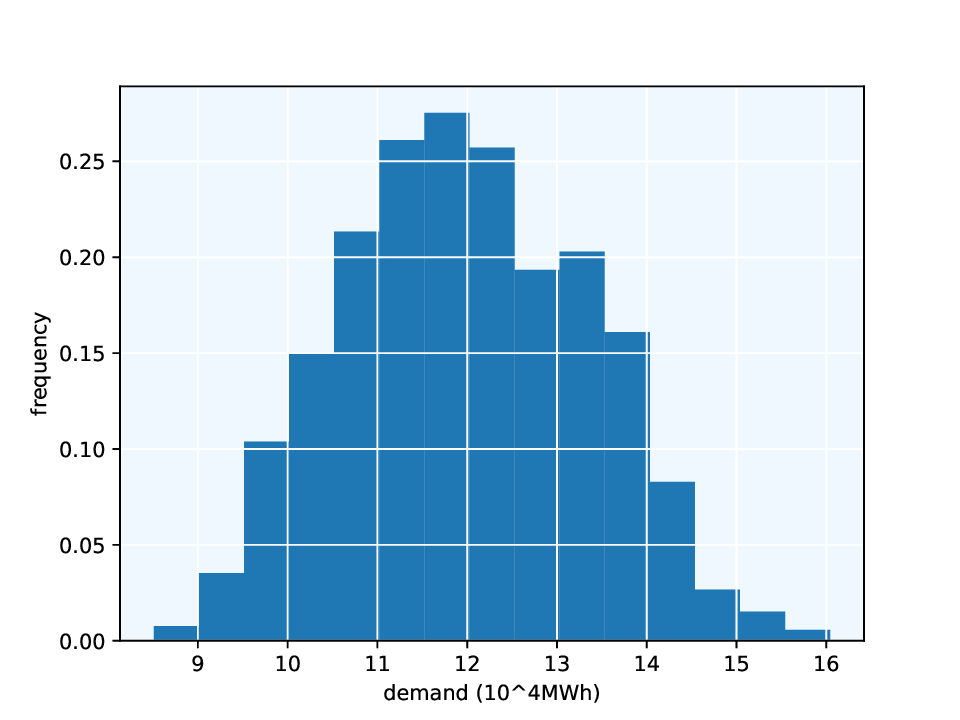}
    \caption{Demand distribution for real data}
    \label{fig::real-demand-distribution}
\end{figure}

\textit{Simulation data} In terms of simulation data, we generate the demand by the following model. 

\begin{equation}
    \label{eq::demand-model}
    D = \max\{0, 60-p+12a^T(X+0.25\phi) + 5b^TX\theta \}
\end{equation}

where $\phi\sim N(0, I_4)$ is a $4$-dimensional vector, $\theta\sim N(0, 1)$ is also a Guassian parameter. Both $\theta$ and $\phi$ are the stochastic factors that cause demand fluctuation. The constant vector $a=(0,8,1,1,1)^T$ and $b=(-1,1,0,0)^T$. Note that we refer the demand model to \cite{Shen-2022-POM}. The demand distribution under $p=20$ is shown in Figure \ref{fig::sim-demand-distribution}. We observe that the distribution is skew and long tail, thus hard to predict by simple models such as linear regression.

\begin{figure}[htbp]
    \centering
    \includegraphics[width=0.6\linewidth]{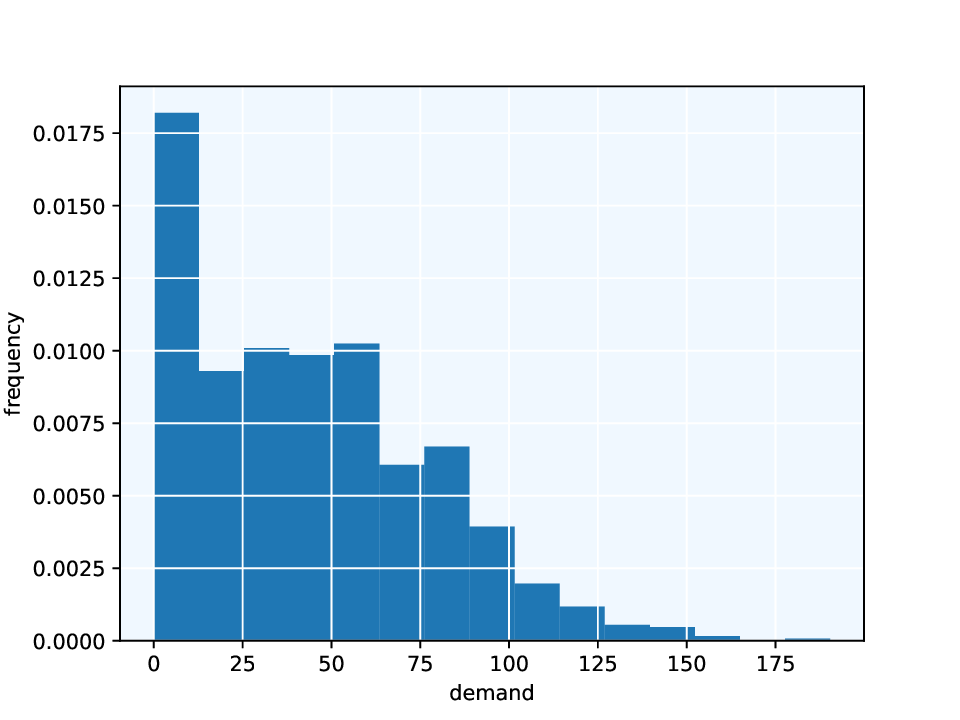}
    \caption{Demand distribution for simulated data}
    \label{fig::sim-demand-distribution}
\end{figure}

The second demand model is simply a linear regression model to suit the PTO assumption, where $D = 60 - p + (1,1,1,1)^Tz + \phi$ and $\phi\sim N(0, 1)$. Thus the demand follows a normal distribution under any price and feature.

\subsection{Step size comparison}
\label{sec::appd-step-compare}

The step size of AGD algorithm adopted in the numerical experiment part is the Armijo step size with $\sigma=0$. In Section \ref{sec::convergence-nonconvex}, we have analyzed the difference between the diminishing step size and Armijo step size. In this section, we will evaluate the difference by experiment.

We first compare two kinds of step size in the simulated dataset. We set the step size constant $C=0.05$ in diminishing step size approach. The realized profit and optimality gap are shown in Table \ref{tab::stepsize-compare}. We can find that the diminishing step size performs worse than Armijo step size in this case. We believe the reason is that the assumptions for the convergence under diminishing step size are usually too strong. The value range $\Omega$ in Assumption \ref{assum::same-limit-range} $L_4$ and $L_5$ constant in Assumption \ref{assum::bound} maybe large in practice, causing a bad convergence performance. Moreover, we find that although any convergent subsequence converge to the local maximum according to Proposition \ref{prop::converge-stationary}, the diminishing step size cannot stop at the local maximum automatically, which indicates that the diminishing step size may not lead to any convergence subsequence. Therefore, the diminishing step size need a careful selection on the step size constant $C$ and stop criteria.

\begin{table}[htbp]
    \centering
    \caption{Realized profit of Armijo step size and Diminishing step size}
      \begin{tabular}{ccccc}
      \toprule
      Method & kNN   & kernel & CART  & RF \\
      \midrule
      Armijo & 674.37 & 698.20 & 689.94 & 584.07 \\
      Diminishing & 524.05 & 512.858 & 560.0039 & 388.8681 \\
      \bottomrule
      \end{tabular}%
    \label{tab::stepsize-compare}%
\end{table}%

We also evaluate the effect of hyperparameter $\sigma$ on AGD algorithm. Table \ref{tab::armijo-sigma} reports the optimality gap and iteration number for different constant $\sigma$ under kernel regression . Figure \ref{fig::armijo-sigma} plots the supplementary result in terms of $\sigma$ and optimality gap. We observe the performance is stable when $\alpha_0\in (0.01, 0.1)$, and when $\sigma\leq 0.2$. The increment of both $\alpha_0$ and $\sigma$ can reduce the iteration numbers, thus accelerate the solution. But when $\alpha_0$ is larger than $0.5$, the optimality gap may become larger. Larger $\sigma$ can block the update of solution and may cause the algorithm to stop before reaching the convergence.  

\begin{table}[htbp]
    \centering
    \caption{Performance comparison among different initial step sizes and $\sigma$ of Armijo step size}
      \begin{tabular}{cccc|rrrr}
      \toprule
      ($\alpha_0$, $\sigma$) & profit & optimality gap & iterations & \multicolumn{1}{c}{($\alpha_0$, $\sigma$)} & \multicolumn{1}{c}{profit} & \multicolumn{1}{c}{optimality gap} & \multicolumn{1}{c}{iterations} \\
      \midrule
      (0.01, 0) & 695.89 & 0.97\% & 90    & \multicolumn{1}{c}{(0.5, 0)} & 672.02 & 4.37\% & 2 \\
      (0.01, 0.1) & 688.28 & 2.05\% & 74    & \multicolumn{1}{c}{(0.5, 0.1)} & 691.25 & 1.63\% & 2 \\
      (0.01, 0.2) & 659.48 & 6.15\% & 62    & \multicolumn{1}{c}{(0.5, 0.2)} & 667.36 & 5.03\% & 2 \\
      (0.01, 0.5) & 475.37 & 32.35\% & 28    & \multicolumn{1}{c}{(0.5, 0.5)} & 562.69 & 19.92\% & 1 \\
      (0.01, 0.9) & 300.00   & 57.31\% & 0     & \multicolumn{1}{c}{(0.5, 0.9)} & 300.00   & 57.31\% & 0 \\
      (0.05, 0) & 698.20 & 0.64\% & 18    & \multicolumn{1}{c}{(1, 0)} & 653.18 & 7.05\% & 1 \\
      (0.05, 0.1) & 688.84 & 1.97\% & 15    & \multicolumn{1}{c}{(1, 0.1)} & 653.18 & 7.05\% & 1 \\
      (0.05, 0.2) & 662.67 & 5.70\% & 13    & \multicolumn{1}{c}{(1, 0.2)} & 653.18 & 7.05\% & 1 \\
      (0.05, 0.5) & 478.73 & 31.87\% & 6     & \multicolumn{1}{c}{(1, 0.5)} & 569.02 & 19.02\% & 1 \\
      (0.05, 0.9) & 300.00  & 57.31\% & 0     & \multicolumn{1}{c}{(1, 0.9)} & 300.00   & 57.31\% & 0 \\
      (0.1, 0) & 696.72 & 0.85\% & 9     &       &       &       &  \\
      (0.1, 0.1) & 689.42 & 1.89\% & 8     &       &       &       &  \\
      (0.1, 0.2) & 659.40 & 6.16\% & 7     &       &       &       &  \\
      (0.1, 0.5) & 491.16 & 30.10\% & 3     &       &       &       &  \\
      (0.1, 0.9) & 300.00   & 57.31\% & 0     &       &       &       &  \\
      \bottomrule
      \end{tabular}%
    \label{tab::armijo-sigma}%
  \end{table}%

\begin{figure}[htbp]
    \centering
    \includegraphics[width=0.6\linewidth]{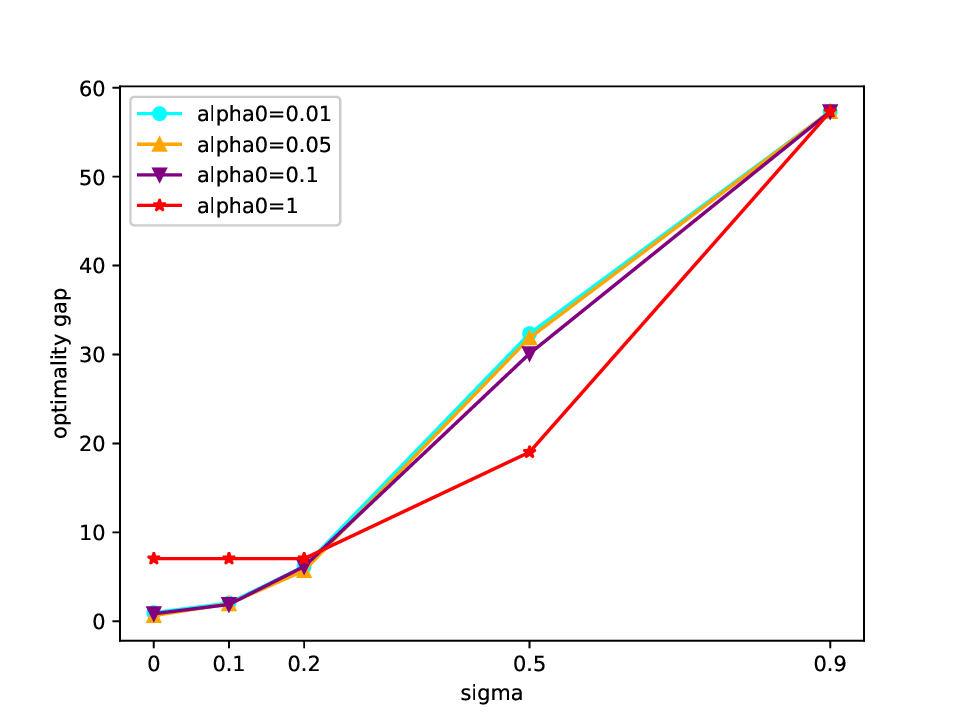}
    \caption{Performance comparison among different initial step sizes and $\sigma$ of Armijo step size}
    \label{fig::armijo-sigma}
\end{figure}







%
%
%

\end{document}